\newtheorem{theorem}{Theorem}
\newtheorem{lemma}{Lemma}
\newtheorem{cor}{Corollary}
\newtheorem{rem}{Remark}
\newtheorem{example}{Example}
\newtheorem{problem}{Problem}
\numberwithin{equation}{section}
\newcommand{\abs}[1]{\left\vert#1\right\vert}
\newcommand{\R}{\mbox{$\mathbb{R}$}}
\newcommand{\C}{\mbox{$\mathbb{C}$}}
\newcommand{\D}{\mbox{$\mathbb{D}$}}
\begin{document}
\setcounter{page}{1}

\title[Construction of univalent harmonic mappings convex in one direction]
{Construction of univalent harmonic mappings\\ convex in one direction}

\author[Zhi-Gang Wang, Lei Shi and Yue-Ping Jiang]{Zhi-Gang Wang, Lei Shi and Yue-Ping Jiang}

\vskip.05in
\address{\noindent Zhi-Gang Wang\vskip.03in
School of Mathematics and Computing Science, Hunan
First Normal University, Changsha 410205, Hunan, P. R. China.}

\email{\textcolor[rgb]{0.00,0.00,0.84}{wangmath$@$163.com}}

\address{\noindent Lei Shi\vskip.03in
 School of Mathematics and Statistics, Anyang Normal University,
Anyang 455002, Henan, P. R. China.}

\email{\textcolor[rgb]{0.00,0.00,0.84}{shimath$@$163.com}}

\address{\noindent Yue-Ping Jiang\vskip.03in
School of Mathematics, Hunan University,
Changsha 410082, Hunan, P. R. China.}
\email{\textcolor[rgb]{0.00,0.00,0.84}{ypjiang731$@$163.com}}

\subjclass[2010]{Primary 30C55; Secondary 58E20.}

\keywords{Univalent harmonic mappings, shearing technique, linear combination,
convolution.}

\begin{abstract}
 In the present paper, we derive several conditions of linear combinations and convolutions of harmonic
mappings to be univalent and convex in one direction,
one of them gives a partial answer to an open problem proposed by Dorff.
The results presented here provide extensions and improvements of those given in some earlier works. Several examples of univalent harmonic
mappings convex in one direction are also constructed to demonstrate the main results.
\end{abstract}

\vskip.20in

\maketitle

\tableofcontents

\section{Introduction}

In 1984, Clunie and Sheil-Small \cite{CL} had pointed out that many of the classical
results for conformal mappings have analogues for planar harmonic mappings. Since that time, the theory of planar harmonic mappings
  from the perspective of conformal mappings has received much attention, but a number of basic
problems remain unresolved (see \cite{d} and the references therein).

Let $\mathcal{H}$ denote the class of complex-valued harmonic functions $f$ in the open unit disk $\mathbb{D}=\{z\in\C:\abs{z}<1\}$
normalized by $f(0)=f_{\bar{z}}(0)=f_{z}(0)-1=0$. Such functions can be written in the form $f=h+\bar{g}$, where
\begin{equation}\label{11}
h(z)=z+\sum_{n=2}^{\infty}{a_{n}z^{n}}\ {\rm{and}}\ g(z)=\sum_{n=2}^{\infty}{b_{n}z^{n}}
\end{equation}
are analytic in $\mathbb{D}$. A function $f\in\mathcal{H}$
is locally univalent and sense-preserving in $\mathbb{D}$ if and only if
\begin{equation*}\label{12}
\abs{g^{\prime}(z)}<\abs{h^{\prime}(z)}\quad (z\in\D).
\end{equation*}

 We denote by $\mathcal{S}_{\mathcal{H}}^{0}$ the subclass of $\mathcal{H}$ consisting of univalent and sense-preserving harmonic functions.
 Let  $\mathcal{P}$  be the class of functions $p$ of the form
\begin{equation*}\label{13}
p(z)=1+\sum_{n=1}^{\infty}c_{n}z^n\quad(z\in\D),
\end{equation*}
which are analytic in $\D$ and satisfy the condition $\Re\left(p(z)\right)>0$.

 A domain $\Omega\subset\C$
is said to be convex in the direction $\gamma$, if for all $a\in\C$, the set
$\Omega\cap\{a+te^{i\gamma}:\ t\in\R\}$ is either connected or empty. Particularly, a domain is convex in the direction of real ({\it resp.} imaginary) axis if its intersection with each horizontal ({\it resp.} vertical) lines is connected. A function $f\in\mathcal{H}$ is convex in the direction of real ({\it resp.} imaginary) axis if it maps $\D$ onto a domain convex in the direction of real ({\it resp.} imaginary) axis. Clunie and Sheil-Small \cite{CL} introduced the shear construction method to produce a harmonic mapping with a specified dilatation onto a  domain convex in one direction by shearing a given conformal mapping along parallel lines.

For two harmonic functions
\begin{equation*}\label{14}
f=h+\overline{g}=z+\sum_{n=2}^{\infty}a_{n}z^{n}+\sum_{n=1}^{\infty}\overline{b_{n}}\overline{z}^{n},
\end{equation*}
and
\begin{equation*}\label{15}
F=H+\overline{G}=z+\sum_{n=2}^{\infty}A_{n}z^{n}+\sum_{n=1}^{\infty}\overline{B_{n}}\overline{z}^{n},
\end{equation*}
we define the convolution of them by
\begin{equation}\label{16}
f*F=h*H+\overline{g*G}=z+\sum_{n=2}^{\infty}a_{n}A_{n}z^{n}+\sum_{n=1}^{\infty}\overline{b_{n}}\overline{B_{n}}\overline{z}^{n}.
\end{equation}

For some recent investigations on planar harmonic mappings, one can refer to \cite{am,bl,blw,c1,c2,cg,cpw,Dorff,DO,DorffM,d,hm,iko,k,kv,LI,liu,m1,m2,n,s}.

Macgregor \cite{MC} had shown that the convex combination
$tf+(1-t)g\ (0\leqq t\leqq1)$ of analytic functions need not to be univalent, even if $f$ and $g$ are convex functions. Results on linear combinations for analytic case, see (for example) \cite{Douglas,Trimble}. Let $f_{1}=h_{1}+\overline{g_{1}}$ and  $f_{2}=h_{2}+\overline{g_{2}}$ be two harmonic mappings in $\D$, the linear combination $f_{3}$ of $f_{1}$ and $f_{2}$ is given by
\begin{equation*}\label{17}
f_{3}=tf_{1}+(1-t)f_{2}=[th_{1}+(1-t)h_{2}]+[t\overline{g_{1}}+(1-t)\overline{g_{2}}]=h_{3}+\overline{g_{3}}.
\end{equation*}
For this case, Dorff \cite{DO} provided some  sufficient conditions for the linear combination $f_{3}=tf_{1}+(1-t)f_{2}$  to be univalent and convex in the direction of imaginary axis under the assumption $\omega_{1}=\omega_{2}$, where $\omega_{1}$ and $\omega_{2}$ are the dilatation of
$f_{1}$ and $f_{2}$, respectively. He also posed the following open problem.

\begin{problem}\label{p1}
Does the convex combination $f_{3}=tf_{1}+(1-t)f_{2}\ (0\leqq t\leqq1)$ to be univalent and convex in the direction of imaginary axis without the condition $\omega_{1}=\omega_{2}$?
\end{problem}

Furthermore, Wang \textit{et al.} \cite{WA} proved that the linear combination  $f_{3}=tf_{1}+(1-t)f_{2}\ (0\leqq t\leqq1)$ of two harmonic univalent mappings $f_j=h_j+\overline{g_j}\ (j=1,2)$
with $h_{j}+g_{j}=z/(1-z)$  is univalent and convex in the direction of real axis.

Let $\mathcal{A}$ be the subclass of
$\mathcal{H}$ consisting of normalized analytic functions. For $\phi\in\mathcal{A}$ and $\abs{\lambda}=1$, let
\[\mathcal{W}_{\mathcal{H}}^{\lambda}(\phi):=\{h+\overline{g}\in\mathcal{H}:h+\lambda g=\phi\}.\]
We note that the function classes $\mathcal{W}_{\mathcal{H}}^{1}(\phi)$ and $\mathcal{W}_{\mathcal{H}}^{-1}(\phi)$ were introduced by Nagpal and Ravichandran \cite{SU}, which are used to discuss convolution properties of planar harmonic mappings with some special choices of $\phi$.

In this paper, we aim at deriving some conditions for linear combinations and convolutions of harmonic mappings to be univalent and convex in one direction,
one of them gives a partial answer to Problem \ref{p1} proposed by Dorff \cite{DO}. Some examples of univalent harmonic mappings are also constructed to demonstrate the main results.

\vskip.20in
\section{Preliminary results}

In order to derive our main results, we require the following lemmas.

\begin{lemma}\label{Lem7}
Let $\abs{\lambda}=1$ and $\varphi$ is analytic in $\D$. Suppose that $f_{j}=h_{j}+\overline{g_{j}}\in \mathcal{S}_{\mathcal{H}}^{0}$ with
\begin{equation*}
h_{j}-\lambda g_{j}=\varphi\quad(j=1,2)
\end{equation*} for some  $\lambda$ and $\varphi$. Then
$f_{3}=tf_{1}+(1-t)f_{2}\ (0\leqq t\leqq1)$
is locally univalent.
\end{lemma}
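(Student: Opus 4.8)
The plan is to verify the local-univalence criterion $\abs{g_3'(z)}<\abs{h_3'(z)}$ directly, exploiting the fact that $f_1$ and $f_2$ share the \emph{same} $\lambda$ and $\varphi$. Writing $f_3=h_3+\overline{g_3}$ with $h_3=th_1+(1-t)h_2$ and $g_3=tg_1+(1-t)g_2$, I would first differentiate the defining relation $h_j-\lambda g_j=\varphi$ to obtain $h_j'=\varphi'+\lambda g_j'$ for $j=1,2$. Taking the same convex combination shows the relation is inherited by $f_3$:
\[
h_3'=th_1'+(1-t)h_2'=\varphi'+\lambda g_3'.
\]
Since each $f_j\in\mathcal{S}_{\mathcal{H}}^{0}$ is sense\nbd-preserving, $\abs{g_j'}<\abs{h_j'}=\abs{\varphi'+\lambda g_j'}$; in particular $\varphi'$ cannot vanish anywhere in $\D$, since a zero would force $\abs{h_j'}=\abs{g_j'}$.

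The crux is that, because $\abs{\lambda}=1$, the quantity $\abs{h_3'}^2-\abs{g_3'}^2$ is \emph{affine}, rather than quadratic, in $g_3'$. Indeed,
\[
\abs{h_3'}^2-\abs{g_3'}^2=\abs{\varphi'+\lambda g_3'}^2-\abs{g_3'}^2=\abs{\varphi'}^2+2\Re\!\left(\overline{\lambda}\,\varphi'\,\overline{g_3'}\right),
\]
the $\abs{g_3'}^2$ term cancelling precisely because $\abs{\lambda g_3'}=\abs{g_3'}$. Substituting $g_3'=tg_1'+(1-t)g_2'$ and using linearity of $\Re(\cdot)$, the right\nbd-hand side becomes the convex combination of the expressions $\abs{\varphi'}^2+2\Re(\overline{\lambda}\,\varphi'\,\overline{g_j'})=\abs{h_j'}^2-\abs{g_j'}^2$ for $j=1,2$, each of which is strictly positive by the sense\nbd-preserving property of $f_j$. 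Hence $\abs{h_3'}^2-\abs{g_3'}^2>0$, i.e. $\abs{g_3'(z)}<\abs{h_3'(z)}$ throughout $\D$, which is exactly local univalence (and sense\nbd-preservation) of $f_3$.

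I expect the only genuine obstacle to be recognizing \emph{why} the hypothesis forces the conclusion: for an arbitrary pair of sense\nbd-preserving maps the linear combination need not be locally univalent, and $\abs{h_3'}^2-\abs{g_3'}^2$ would then carry an uncontrolled cross term. The shared normalization $h_j-\lambda g_j=\varphi$ with $\abs{\lambda}=1$ is exactly what linearizes the problem, converting the sense\nbd-preserving condition into membership of $w=g_j'(z)$ in the fixed half\nbd-plane $\{w:\Re(\overline{\lambda}\,\varphi'(z)\,\overline{w})>-\tfrac12\abs{\varphi'(z)}^2\}$; convexity of this half\nbd-plane, together with $g_3'=tg_1'+(1-t)g_2'$, then yields the conclusion immediately. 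Everything else is a routine expansion, so no delicate estimate is required.
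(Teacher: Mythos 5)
Your proof is correct, and it takes a more elementary (though ultimately equivalent) route than the paper's. The paper argues through the dilatation: it writes $\frac{1+\lambda\omega_{3}}{1-\lambda\omega_{3}}=\frac{h_{3}^{\prime}+\lambda g_{3}^{\prime}}{h_{3}^{\prime}-\lambda g_{3}^{\prime}}$, notes that the denominator equals $\varphi^{\prime}$ for all three functions, splits the quotient as the convex combination $t\,\Re\bigl(\tfrac{1+\lambda\omega_{1}}{1-\lambda\omega_{1}}\bigr)+(1-t)\,\Re\bigl(\tfrac{1+\lambda\omega_{2}}{1-\lambda\omega_{2}}\bigr)>0$, and concludes $\abs{\omega_{3}}<1$. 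You instead compute the Jacobian $J_{f_{3}}=\abs{h_{3}^{\prime}}^{2}-\abs{g_{3}^{\prime}}^{2}$ directly and show that the constraint $h^{\prime}=\varphi^{\prime}+\lambda g^{\prime}$ with $\abs{\lambda}=1$ makes it real-affine in $g_{3}^{\prime}$, whence $J_{f_{3}}=tJ_{f_{1}}+(1-t)J_{f_{2}}>0$. The two arguments are linked by the identity $\Re\bigl(\tfrac{1+\lambda\omega_{j}}{1-\lambda\omega_{j}}\bigr)=\frac{\abs{h_{j}^{\prime}}^{2}-\abs{g_{j}^{\prime}}^{2}}{\abs{\varphi^{\prime}}^{2}}$: the paper's positive quantity is exactly your Jacobian divided by the weight $\abs{\varphi^{\prime}}^{2}$, which is common to $j=1,2,3$, so the two inequalities coincide up to that positive factor. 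What your version buys: you never divide by $\varphi^{\prime}$, so you need no preliminary claim that $\varphi^{\prime}$ is zero-free (your parenthetical remark on this is correct but is in fact never used), and the intermediate statement ``the Jacobian of the combination is the convex combination of the Jacobians'' is a clean, quotable fact. What the paper's version buys: it stays inside the standard dilatation framework ($\abs{\omega_{3}}<1$), which is the form in which local univalence is invoked elsewhere in the paper, and its half-plane mechanism --- the M\"obius map $\omega\mapsto(1+\lambda\omega)/(1-\lambda\omega)$ sending the disk to the right half-plane --- is precisely the convexity picture you sketch at the end of your proposal.
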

\begin{proof} For $f_{3}=tf_{1}+(1-t)f_{2}\ (0\leqq t\leqq1)$, the dilatation of $f_{3}$ is given by
 \begin{equation*}
\omega_{3}=\frac{tg_{1}^{\prime}+(1-t)g_{2}^{\prime}}{th_{1}^{\prime}+(1-t)h_{2}^{\prime}}.
 \end{equation*}
Since $\abs{\lambda\omega_{j}}=\abs{\omega_{j}}<1\ (j=1,2;\abs{\lambda}=1)$ and $h_{j}-\lambda g_{j}=\varphi$, we have
\begin{equation*}\begin{split}
\Re\left(\frac{1+\lambda\omega_{3}}{1-\lambda\omega_{3}}\right)=&\Re\left(\frac{th_{1}^{\prime}+(1-t)h_{2}^{\prime}
+\lambda[tg_{1}^{\prime}+(1-t)g_{2}^{\prime}]}{th_{1}^{\prime}+(1-t)h_{2}^{\prime}-\lambda[tg_{1}^{\prime}+(1-t)g_{2}^{\prime}]}\right)\\
=&t\Re\left(\frac{h_{1}^{\prime}+\lambda g_{1}^{\prime}}{\varphi^{\prime}}\right)
+(1-t)\Re\left(\frac{h_{2}^{\prime}+\lambda g_{2}^{\prime}}{\varphi^{\prime}}\right)\\
=&t\Re\left(\frac{h_{1}^{\prime}+\lambda g_{1}^{\prime}}{h_{1}^{\prime}-\lambda g_{1}^{\prime}}\right)
+(1-t)\Re\left(\frac{h_{2}^{\prime}+\lambda g_{2}^{\prime}}{h_{2}^{\prime}-\lambda g_{2}^{\prime}}\right)\\
=&t\Re\left(\frac{1+\lambda\omega_{1}}{1-\lambda\omega_{1}}\right)
+(1-t)\Re\left(\frac{1+\lambda\omega_{2}}{1-\lambda\omega_{2}}\right)\\
>&0,\end{split}
\end{equation*}
which implies  that $\abs{\omega_{3}}=\abs{\lambda\omega_{3}}<1$, so $f_{3}$ is locally univalent. This completes the proof of Lemma \ref{Lem7}.
\end{proof}

\begin{lemma}\label{Lem1}{\rm{(See \cite{CL})}}
A sense-preserving harmonic function $f=h+\overline{g}$ in $\D$ is a univalent mapping of $\D$ convex in the direction of real $(resp.\ imaginary)$ axis if and only if $h-g$ $(resp.\ h+g)$ is a conformal univalent mapping of $\D$ convex in the direction of real $(resp.\ imaginary)$ axis.
\end{lemma}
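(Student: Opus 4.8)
The plan is to treat the real-axis statement in full (the imaginary-axis case then follows by a rotation) and to exploit the elementary but decisive identities $\Re f=\Re(h+g)$ and $\Im f=\Im(h-g)$, which follow at once from $f=h+\overline{g}$ together with $\Re\overline{g}=\Re g$ and $\Im\overline{g}=-\Im g$. Writing $\phi:=h-g$, these identities say that $f$ and $\phi$ have \emph{exactly} the same imaginary part, so the horizontal level structure of the two images is identical; since convexity in the direction of the real axis means that every horizontal line meets the image in a connected set, this is the feature I would track. Throughout I would use that $f$ is sense-preserving, i.e. its dilatation $\omega=g'/h'$ satisfies $\abs{\omega}<1$, so that $h'\ne0$ and $\phi'=h'(1-\omega)\ne0$.

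For the direction ``$\phi$ univalent and convex in the horizontal direction $\Rightarrow f$ univalent and convex in the horizontal direction'', I would argue univalence by contradiction. Suppose $f(z_1)=f(z_2)$ with $z_1\ne z_2$. Then $\Im\phi(z_1)=\Im\phi(z_2)$, so $\phi(z_1),\phi(z_2)$ lie on a common horizontal line; since $\phi$ is univalent they are distinct, and by horizontal convexity of $\phi(\D)$ the horizontal segment joining them lies in $\phi(\D)$. Its preimage $\gamma=\phi^{-1}(\text{segment})$ is a well-defined arc in $\D$ from $z_1$ to $z_2$ along which $\Im\phi$ is constant. Parametrising $\gamma$ so that $d\phi=(h'-g')\,dz=dt$ is real and positive, I would compute $d(\Re f)=\Re\big((h'+g')\,dz\big)=\Re\big(\tfrac{h'+g'}{h'-g'}\big)\,dt=\Re\big(\tfrac{1+\omega}{1-\omega}\big)\,dt>0$, the inequality holding because $\abs{\omega}<1$. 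Hence $\Re f$ is strictly increasing along $\gamma$, contradicting $\Re f(z_1)=\Re f(z_2)$; this gives univalence of $f$. The same computation shows $f$ maps $\gamma$ monotonically onto the horizontal segment joining $f(z_1)$ and $f(z_2)$, so that segment lies in $f(\D)$, proving convexity in the direction of the real axis.

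For the converse, assume $f$ is univalent, sense-preserving and convex in the horizontal direction, and show $\phi$ inherits both properties. Given $z_1\ne z_2$ with $\Im\phi(z_1)=\Im\phi(z_2)$ (needed both for univalence-by-contradiction and for convexity), the points $f(z_1),f(z_2)$ lie on a common horizontal line, so the segment between them lies in $f(\D)$ and its $f$-preimage $\Gamma$ is an arc from $z_1$ to $z_2$. Parametrising by $df=h'\,dz+\overline{g'}\,d\bar z=ds$ real, I would set $A=h'\,dz$ and $B=g'\,dz$, so that $A+\overline{B}$ is real while $\abs{A}>\abs{B}$ (sense-preserving). Then $A+\overline{B}\in\R$ forces $\Im A=\Im B$, whence $\abs{\Re A}>\abs{\Re B}$; combined with $\Re A+\Re B=ds>0$ this yields $\Re(d\phi)=\Re(A-B)=\Re A-\Re B>0$. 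Thus $\Re\phi$ is strictly monotone along $\Gamma$ while $\Im\phi$ is constant, which simultaneously gives univalence of $\phi$ (the endpoints have distinct real parts) and, via the monotone traversal, that the horizontal segment joining $\phi(z_1),\phi(z_2)$ lies in $\phi(\D)$.

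The main obstacle I anticipate is not any single computation but the careful construction and regularity of the preimage arcs $\gamma$ and $\Gamma$: one must know that the relevant horizontal segment lies entirely in the image, that its preimage under the (locally) univalent map is a genuine arc joining $z_1$ to $z_2$ inside $\D$, and that the parametrisation making $d\phi$ (resp. $df$) real is legitimate, so that the sign computations are meaningful. Once these topological points are secured, the sign of $\Re\big(\tfrac{1+\omega}{1-\omega}\big)$ and the short linear-algebra argument with $A,B$ carry all the analytic weight. Finally, the imaginary-axis statement follows by applying the real-axis result to $-i\,f(z)$, whose associated analytic combination is $-i(h+g)$, so that horizontal convexity of $-i\,f$ corresponds to vertical convexity of $f$ and to horizontal convexity of $-i(h+g)$, i.e. to convexity of $h+g$ in the direction of the imaginary axis.
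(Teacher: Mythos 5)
The paper itself contains no proof of this lemma: it is quoted directly from Clunie and Sheil-Small \cite{CL} (it is their shear-construction theorem), so the only meaningful comparison is with the classical argument. Your proof is correct, and in substance it \emph{is} that classical argument. The two pillars — the identities $\Re f=\Re(h+g)$, $\Im f=\Im(h-g)$, and the reduction of both univalence and horizontal convexity to strict monotonicity of a real part along preimages of horizontal segments — are exactly the mechanisms in \cite{CL}; so are the two sign computations, namely $\Re\left(\frac{1+\omega}{1-\omega}\right)>0$ in one direction and the elementary argument from $\abs{A}>\abs{B}$, $\Im A=\Im B$, $\Re A+\Re B>0$ to $\Re A-\Re B>0$ in the other. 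The main organizational difference is that the standard exposition (Clunie--Sheil-Small, and Duren's book) packages the comparison between $f$ and $\phi=h-g$ through the observation $f=\phi+2\Re g$, i.e.\ $F=f\circ\phi^{-1}$ is a map of the domain $\phi(\D)$ that moves points only horizontally, and then analyzes such ``horizontal shears'' of a domain convex in the real direction; your version avoids forming this composition and instead runs the monotonicity computation directly along the arcs $\gamma=\phi^{-1}(\text{segment})$ and $\Gamma=f^{-1}(\text{segment})$. The regularity points you flag as the main obstacle are genuine but are settled exactly as you indicate: $\phi^{-1}$ is holomorphic because $\phi$ is conformal, and $f$ is a diffeomorphism onto its image because it is univalent with Jacobian $\abs{h'}^2-\abs{g'}^2>0$, so both preimage arcs are smooth and the parametrizations making $d\phi$ (resp.\ $df$) real are legitimate. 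Your rotation reduction for the imaginary-axis case is also sound: $-if$ has analytic parts $-ih$ and $ig$, hence analytic combination $-i(h+g)$, and no normalization is required by the statement, so nothing is lost in rotating.
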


\begin{lemma}\label{Lem03}{\rm{(See \cite{CL})}}
A harmonic function $f = h + \overline{g}$ locally univalent in $\D$ is a univalent mapping
of $\D$ onto a domain convex in the direction $\gamma$
 if and only if $h-e^{2i\gamma}
g$ is an analytic univalent
mapping of $\D$ onto a domain convex in the direction $\gamma$.
\end{lemma}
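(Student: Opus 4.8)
The plan is to reduce the general direction-$\gamma$ statement to the real-axis case already recorded in Lemma \ref{Lem1} by an appropriate rigid rotation of the plane. First I would introduce the rotated mapping $\tilde{f}:=e^{-i\gamma}f$. Writing $f=h+\overline{g}$ and separating the analytic from the anti-analytic part, one finds
\[
\tilde{f}=e^{-i\gamma}h+e^{-i\gamma}\overline{g}=\tilde{h}+\overline{\tilde{g}},\qquad \tilde{h}:=e^{-i\gamma}h,\quad \tilde{g}:=e^{i\gamma}g,
\]
where the \emph{conjugate} rotation factor on $\tilde{g}$ is forced by the identity $e^{-i\gamma}\overline{g}=\overline{e^{i\gamma}g}$. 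This piece of algebraic bookkeeping is exactly what will produce the factor $e^{2i\gamma}$ in the final statement, and it is the one step that must be handled with care.

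Next I would verify that $\tilde{f}$ is still locally univalent and sense-preserving. Its dilatation is $\tilde{g}'/\tilde{h}'=e^{2i\gamma}g'/h'=e^{2i\gamma}\omega_{f}$, which has the same modulus as the dilatation $\omega_{f}$ of $f$; since $\abs{\omega_{f}}<1$ throughout $\D$, the same bound holds for $\tilde{f}$, so Lemma \ref{Lem1} is applicable to $\tilde{f}$.

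Then I would record the geometric equivalence: multiplication by $e^{-i\gamma}$ is a rotation carrying every line in the direction $\gamma$ to a horizontal line, so $f(\D)=\Omega$ is convex in the direction $\gamma$ if and only if $\tilde{f}(\D)=e^{-i\gamma}\Omega$ is convex in the direction of the real axis, and $f$ is univalent if and only if $\tilde{f}$ is. Applying Lemma \ref{Lem1} to the sense-preserving map $\tilde{f}$, the latter is a univalent mapping convex in the direction of the real axis if and only if $\tilde{h}-\tilde{g}$ is an analytic univalent mapping of $\D$ onto a domain convex in the direction of the real axis.

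Finally I would compute $\tilde{h}-\tilde{g}=e^{-i\gamma}h-e^{i\gamma}g=e^{-i\gamma}\bigl(h-e^{2i\gamma}g\bigr)$ and undo the rotation: multiplying by the unimodular constant $e^{-i\gamma}$ preserves univalence and merely rotates the image, so $\tilde{h}-\tilde{g}$ is univalent and convex in the direction of the real axis precisely when $h-e^{2i\gamma}g$ is univalent and convex in the direction $\gamma$. Chaining these equivalences yields the claim. The only genuinely delicate point is the correct tracking of the rotation through the anti-analytic part, which is what distinguishes the factor $e^{2i\gamma}$ from a naive $e^{i\gamma}$; every other ingredient is just the routine invariance of univalence and of directional convexity under rotation.
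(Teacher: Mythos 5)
Your reduction is correct, and it necessarily differs from the paper, because the paper does not prove Lemma \ref{Lem03} at all: the lemma is quoted from Clunie and Sheil-Small \cite{CL}, accompanied only by the remark that it generalizes Lemma \ref{Lem1}. What you have written is the standard way of passing from the horizontal case to a general direction: post-compose with the rotation $w\mapsto e^{-i\gamma}w$, so that $\tilde f=e^{-i\gamma}f=\tilde h+\overline{\tilde g}$ with $\tilde h=e^{-i\gamma}h$ and $\tilde g=e^{i\gamma}g$ (the conjugate factor on the anti-analytic part is indeed the crux, and it is exactly what produces $e^{2i\gamma}$), note that the dilatation $e^{2i\gamma}\omega_{f}$ has the same modulus as $\omega_{f}$, apply Lemma \ref{Lem1} to $\tilde f$, and rotate back via $\tilde h-\tilde g=e^{-i\gamma}\left(h-e^{2i\gamma}g\right)$. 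Rotations preserve univalence and carry lines of direction $\gamma$ to horizontal lines and conversely, so each equivalence in your chain is valid; this is essentially how the general-direction statement is derived from the horizontal one in the literature, so your proposal supplies a proof where the paper merely cites one.

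One point is worth flagging. Lemma \ref{Lem1} is stated for \emph{sense-preserving} mappings, while Lemma \ref{Lem03} assumes only local univalence, and your appeal to $\abs{\omega_{f}}<1$ silently assumes the former. A locally univalent harmonic map may be sense-reversing, in which case Lemma \ref{Lem1} does not apply to $\tilde f$ directly. The gap closes easily: if $\abs{g'}>\abs{h'}$, apply your sense-preserving case to $\overline{f}=g+\overline{h}$ in the direction $-\gamma$ (conjugation reflects the image across the real axis, taking lines of direction $\gamma$ to lines of direction $-\gamma$), and observe that $g-e^{-2i\gamma}h=-e^{-2i\gamma}\left(h-e^{2i\gamma}g\right)$, where the unimodular factor $-e^{-2i\gamma}$ rotates direction $\gamma$ to $\pi-\gamma\equiv-\gamma\ (\mathrm{mod}\ \pi)$. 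Since every application of Lemma \ref{Lem03} in this paper is to sense-preserving mappings (local univalence is always established there by a dilatation bound), the omission is harmless in context, but it should be patched if you want a proof of the lemma exactly as stated.
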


We note that Lemma \ref{Lem03} is a generalization of Lemma \ref{Lem1}.

\begin{lemma}\label{Lem2}{\rm{(See \cite{RO})}}
Let $\varphi$ be a non-constant regular function in $\D$. Then the function $\varphi$ maps $\D$ univalently onto a domain convex in the direction of imaginary axis if and only if there exists two real numbers $\mu$ and $\nu$, where $0\leqq\mu<2\pi$ and $0\leqq\nu\leqq\pi$, such that
\begin{equation}\label{21}
\Re\left(-ie^{i\mu}\left(1-2ze^{-i\mu}\cos\nu+z^{2}e^{-2i\mu}\right)\varphi^{\prime}(z)\right)\geqq0\quad(z\in\D).
\end{equation}
\end{lemma}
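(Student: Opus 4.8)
The plan is to reduce the displayed inequality to a monotonicity statement on $\partial\D$ and then transfer it into $\D$ by harmonicity. First I would factor the quadratic multiplier as
\[
1 - 2ze^{-i\mu}\cos\nu + z^{2}e^{-2i\mu} = \bigl(1 - ze^{-i(\mu+\nu)}\bigr)\bigl(1 - ze^{-i(\mu-\nu)}\bigr),
\]
so that its only zeros are the two boundary points $\zeta_{\pm}=e^{i(\mu\pm\nu)}$. Geometrically these are the prime ends that $\varphi$ sends to the extreme points of $\varphi(\D)$ in the horizontal direction, and the role of the parameters is precisely to locate them; the degenerate cases $\nu\in\{0,\pi\}$, where the two zeros coincide, correspond to these extreme points merging.

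The heart of the matter is a boundary computation. Writing $z=e^{i\theta}$ and using $1-e^{i\alpha}=-2i\sin(\alpha/2)e^{i\alpha/2}$, I would establish the identity
\[
\Re\Bigl(-ie^{i\mu}\bigl(1 - 2ze^{-i\mu}\cos\nu + z^{2}e^{-2i\mu}\bigr)\varphi'(z)\Bigr)\Big|_{z=e^{i\theta}}
= 4\sin\tfrac{\theta-\mu-\nu}{2}\,\sin\tfrac{\theta-\mu+\nu}{2}\,\frac{d}{d\theta}\,\Re\,\varphi(e^{i\theta}).
\]
The trigonometric prefactor vanishes exactly at $\theta=\mu\pm\nu$, is negative on the arc between $\zeta_-$ and $\zeta_+$, and positive on the complementary arc. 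Hence \eqref{21} on $\partial\D$ is equivalent to saying that $\Re\,\varphi(e^{i\theta})$ decreases monotonically along one of these arcs and increases along the other. This is exactly the analytic encoding of convexity in the direction of the imaginary axis: it forces every vertical line $\Re w=c$ to meet $\partial\bigl(\varphi(\D)\bigr)$ in at most two points, one on each arc, so that its intersection with $\varphi(\D)$ is a single segment.

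To pass between the boundary and the interior I would observe that $u(z):=\Re\bigl(-ie^{i\mu}Q(z)\varphi'(z)\bigr)$ is harmonic in $\D$, where $Q$ denotes the quadratic factor. For sufficiency, if \eqref{21} holds throughout $\D$ then continuity yields the boundary monotonicity of $\Re\,\varphi$, and an argument-principle count (each horizontal level set of $\Re\,\varphi$ meets the boundary in at most two points, with local univalence preventing folding) gives univalence together with convexity in the imaginary direction. For necessity, given $\varphi$ univalent with $\varphi(\D)$ convex in that direction, I would first locate $\zeta_\pm$, hence $\mu$ and $\nu$, as the preimages of the horizontal extreme points, read off the boundary monotonicity from the geometry, and deduce $u\ge 0$ on $\partial\D$; the minimum principle for the harmonic function $u$ then propagates the inequality to all of $\D$.

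The main obstacle is the necessity direction, and within it the boundary regularity: a priori $\varphi$ need not extend differentiably to $\partial\D$, so the clean identity above is not immediately available. I would resolve this by working with the dilations $\varphi_r(z)=\varphi(rz)$, which are analytic on $\overline{\D}$, proving the boundary inequality and the existence of a common admissible pair $(\mu,\nu)$ for them — using compactness of the parameter set $[0,2\pi)\times[0,\pi]$ — and then letting $r\to 1^{-}$. Care is also needed for the degenerate configurations $\nu=0$ and $\nu=\pi$, where $Q$ has a double zero, and to confirm that local univalence guarantees $\varphi'\neq 0$, so that on the boundary the sign of $u$ is governed solely by the trigonometric prefactor.
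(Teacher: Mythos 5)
The paper itself offers no proof of this lemma: it is imported verbatim from Royster--Ziegler \cite{RO}, so your attempt can only be measured against the classical argument. Your two computational ingredients are correct and are indeed the heart of that argument in the smooth case: the factorization $1-2ze^{-i\mu}\cos\nu+z^{2}e^{-2i\mu}=(1-ze^{-i(\mu+\nu)})(1-ze^{-i(\mu-\nu)})$, and the boundary identity expressing $\Re\bigl(-ie^{i\mu}Q(e^{i\theta})\varphi'(e^{i\theta})\bigr)$ as $4\sin\frac{\theta-\mu-\nu}{2}\sin\frac{\theta-\mu+\nu}{2}\,\frac{d}{d\theta}\Re\,\varphi(e^{i\theta})$, whose sign analysis correctly translates \eqref{21} on the circle into unimodality of $\Re\,\varphi(e^{i\theta})$, i.e.\ into the statement that vertical lines cross the boundary at most twice.

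The genuine gap is your regularization step: both halves of your proof ultimately rest on the dilations $\varphi_{r}(z)=\varphi(rz)$, and that is precisely where the argument breaks, because convexity in one direction is \emph{not} a hereditary property. By a classical example of Goodman and Saff, there are univalent maps of $\D$ onto a domain convex in the direction of the imaginary axis for which $\varphi(r\D)$ is not convex in that (or any) direction for suitable $r<1$. Hence, in the necessity direction, the boundary monotonicity you want for $\varphi_{r}$ can fail for \emph{every} pair $(\mu,\nu)$, and your compactness argument in $[0,2\pi)\times[0,\pi]$ has nothing to extract. The same obstruction shows up analytically in the sufficiency direction: hypothesis \eqref{21} for $\varphi$ only gives $\Re\bigl(-ie^{i\mu}Q(rz)\,r\varphi'(rz)\bigr)\geqq0$, i.e.\ the relevant quadratic for $\varphi_{r}$ is $Q(rz)$, whose zeros $r^{-1}e^{i(\mu\pm\nu)}$ lie \emph{off} the unit circle; your identity needs the zeros on the circle (that is what makes the prefactor real), so no monotonicity of $\theta\mapsto\Re\,\varphi(re^{i\theta})$ follows --- and it may genuinely be false. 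The repairs are different from what you propose. For necessity one exhausts the image $\Omega=\varphi(\D)$ by bounded, smoothly bounded subdomains that are themselves convex in the imaginary direction (truncation in the \emph{image}, not restriction to subdisks), applies your boundary identity to their Riemann maps, and passes to the limit using kernel convergence, compactness of $(\mu,\nu)$, and the minimum principle for the harmonic function $u$. For sufficiency one avoids the boundary altogether: the disk automorphism $T$ sending $e^{i(\mu\pm\nu)}$ to $\pm1$ satisfies $(1-T^{2})/T'=cQ$ with $-ie^{i\mu}c^{-1}$ real, so \eqref{21} transports exactly into Robertson's condition $\Re\bigl((1-w^{2})\widetilde{\varphi}\,'(w)\bigr)\geqq0$ for $\widetilde{\varphi}=\varphi\circ T^{-1}$, which one then proves sufficient by passing to the strip $\{\abs{\Im\zeta}<\pi/4\}$, where it reads $\Re\,F'\geqq0$, and invoking the Noshiro--Warschawski argument plus a level-set analysis (the degenerate cases $\nu\in\{0,\pi\}$ need separate treatment). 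Your ``argument-principle count'' is a placeholder for exactly this step, and as stated it is circular: it presupposes the boundary monotonicity that the regularity problem --- which you correctly flagged but repaired with the wrong tool --- prevents you from having.
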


\begin{lemma}\label{Lem3}
If there exists an analytic function $p\in\mathcal{P}$ and two constants $\mu, \nu\in[0,\pi]$  such that
\begin{equation}
\phi(z)=\int_{0}^{z}\frac{\cos\mu+i (\sin\mu) p(\zeta)}{e^{i\mu}(1-2\zeta e^{-i\mu}\cos\nu+\zeta^{2}e^{-2i\mu})}d\zeta\quad(z\in\D),
\end{equation}
then $\phi$
 is  univalent and convex in the direction of  imaginary axis.
\end{lemma}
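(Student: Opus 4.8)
The plan is to verify the defining inequality of Lemma~\ref{Lem2} for $\varphi=\phi$ with the same $\mu$ and $\nu$; since $\mu,\nu\in[0,\pi]$ these already lie in the admissible ranges $0\leqq\mu<2\pi$ and $0\leqq\nu\leqq\pi$ required there, so once \eqref{21} is confirmed the conclusion is immediate. First I would read $\phi'$ off the integral representation by the fundamental theorem of calculus, namely
\begin{equation*}
\phi'(z)=\frac{\cos\mu+i(\sin\mu)p(z)}{e^{i\mu}\left(1-2ze^{-i\mu}\cos\nu+z^{2}e^{-2i\mu}\right)}.
\end{equation*}
Here I would note that the quadratic in the denominator factors as $(1-ze^{i(\nu-\mu)})(1-ze^{-i(\nu+\mu)})$, whose two roots $e^{i(\mu-\nu)}$ and $e^{i(\mu+\nu)}$ both lie on $\partial\D$; hence the integrand has no pole in $\D$ and $\phi$ is regular there, as Lemma~\ref{Lem2} demands.

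The key observation is that $\phi$ has been written precisely so that the awkward factor $e^{i\mu}\left(1-2ze^{-i\mu}\cos\nu+z^{2}e^{-2i\mu}\right)$ appearing in \eqref{21} is exactly the denominator of $\phi'$. Substituting $\phi'$ into the left-hand side of \eqref{21}, this factor cancels completely, leaving
\begin{equation*}
-i\big(\cos\mu+i(\sin\mu)p(z)\big)=(\sin\mu)p(z)-i\cos\mu.
\end{equation*}
Taking real parts and using that $-i\cos\mu$ is purely imaginary, I obtain $\Re\big(-ie^{i\mu}(1-2ze^{-i\mu}\cos\nu+z^{2}e^{-2i\mu})\phi'(z)\big)=(\sin\mu)\,\Re\big(p(z)\big)$.

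The sign analysis then finishes the argument: because $\mu\in[0,\pi]$ we have $\sin\mu\geqq0$, and because $p\in\mathcal{P}$ we have $\Re(p(z))>0$ for all $z\in\D$, so the product is $\geqq0$, which is exactly \eqref{21}. It remains only to check the non-constancy hypothesis of Lemma~\ref{Lem2}, and this is settled by evaluating at the origin: since $p(0)=1$ the numerator and denominator of $\phi'$ agree at $z=0$, giving $\phi'(0)=1\neq0$. I do not anticipate any real obstacle, since the integrand is engineered so that Lemma~\ref{Lem2} collapses to the positivity of $\Re(p)$; the entire substance of the lemma is this one cancellation.
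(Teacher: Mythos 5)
Your proof is correct and takes essentially the same route as the paper: both substitute $\phi'$ into the Royster--Ziegler criterion of Lemma~\ref{Lem2}, note that the factor $e^{i\mu}\left(1-2ze^{-i\mu}\cos\nu+z^{2}e^{-2i\mu}\right)$ cancels against the denominator of $\phi'$, and conclude from $\Re\left(-ie^{i\mu}(1-2ze^{-i\mu}\cos\nu+z^{2}e^{-2i\mu})\phi'(z)\right)=(\sin\mu)\Re\left(p(z)\right)\geqq0$ since $\sin\mu\geqq0$ on $[0,\pi]$ and $p\in\mathcal{P}$. Your additional verifications (the roots of the quadratic lie on $\partial\D$ so $\phi$ is regular, and $\phi'(0)=1$ so $\phi$ is non-constant) are details the paper leaves implicit but do not change the argument.
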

\begin{proof}
By  Lemma \ref{Lem2}, we only need to show that $\phi$ satisfies the condition
\begin{equation*}
\Re\left(-ie^{i\mu}(1-2ze^{-i\mu}\cos\nu+z^{2}e^{-2i\mu})\phi^{\prime}\right)\geqq0.
\end{equation*}
By observing that
\begin{equation*}
\Re\left(-ie^{i\mu}(1-2ze^{-i\mu}\cos\nu+z^{2}e^{-2i\mu})\phi^{\prime}\right)=\Re((\sin\mu)p(z))\geqq0\quad(z\in\D),
\end{equation*}
we obtain the desired conclusion of Lemma \ref{Lem3}.
\end{proof}

The proof of Lemma \ref{Lem4} is similar to that of Lemma \ref{Lem3}, we choose to omit the details.

\begin{lemma}\label{Lem4}
If there exists an analytic function $p\in\mathcal{P}$ and two constants $\mu\in\left[0,\frac{\pi}{2}\right]\cup\left[\frac{3\pi}{2},\pi\right]$, $\nu\in[0,\pi]$  such that
\begin{equation}
\phi(z)=\int_{0}^{z}\frac{\cos\mu p(\zeta)+i \sin\mu}{e^{i\mu}(1-2\zeta e^{-i\mu}\cos\nu+\zeta^{2}e^{-2i\mu})}d\zeta\quad(z\in\D),
\end{equation}
then $\phi$
 is  univalent and convex in the direction of  real axis.
\end{lemma}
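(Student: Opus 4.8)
The plan is to follow the proof of Lemma \ref{Lem3} almost verbatim, replacing the imaginary-axis criterion of Lemma \ref{Lem2} by its real-axis counterpart. The first step is therefore to record the analogue of Lemma \ref{Lem2} that characterizes univalent functions convex in the direction of the real axis. I would obtain it by a $90^{\circ}$ rotation: if $\Omega\subset\C$ is convex in the direction of the real axis, then $i\Omega$ is convex in the direction of the imaginary axis, so a non-constant regular $\varphi$ maps $\D$ univalently onto a domain convex in the direction of the real axis precisely when $i\varphi$ satisfies the hypothesis of Lemma \ref{Lem2}. Writing Lemma \ref{Lem2} out for $i\varphi$ and using $(-i)\cdot i=1$ yields the criterion: there exist $\mu\in[0,2\pi)$ and $\nu\in[0,\pi]$ with
\[
\Re\left(e^{i\mu}\left(1-2ze^{-i\mu}\cos\nu+z^{2}e^{-2i\mu}\right)\varphi^{\prime}(z)\right)\geqq0\quad(z\in\D).
\]

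With this criterion in hand, the verification is a one-line computation, exactly parallel to Lemma \ref{Lem3}. Differentiating the defining integral gives
\[
\phi^{\prime}(z)=\frac{\cos\mu\, p(z)+i\sin\mu}{e^{i\mu}\left(1-2ze^{-i\mu}\cos\nu+z^{2}e^{-2i\mu}\right)},
\]
so multiplying through by $e^{i\mu}\left(1-2ze^{-i\mu}\cos\nu+z^{2}e^{-2i\mu}\right)$ cancels the denominator and leaves $\cos\mu\, p(z)+i\sin\mu$. Taking real parts, the purely imaginary term $i\sin\mu$ drops out and one is left with
\[
\Re\left(e^{i\mu}\left(1-2ze^{-i\mu}\cos\nu+z^{2}e^{-2i\mu}\right)\phi^{\prime}(z)\right)=(\cos\mu)\,\Re\left(p(z)\right).
\]
Since $p\in\mathcal{P}$ forces $\Re(p(z))>0$ and the restriction on $\mu$ guarantees $\cos\mu\geqq0$, the product is nonnegative. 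As $\phi^{\prime}$ is plainly non-constant, $\phi$ satisfies the rotated criterion and is therefore univalent and convex in the direction of the real axis.

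The routine part is identical to Lemma \ref{Lem3}; the only genuine issue is pinning down the real-axis version of Lemma \ref{Lem2} with the correct sign. In Lemma \ref{Lem3} the multiplier carries a factor $-i$ and the surviving term is $(\sin\mu)\Re(p)$, which is automatically nonnegative for $\mu\in[0,\pi]$ because $\sin\mu\geqq0$ there. Here the rotation removes the $-i$, so the surviving term becomes $(\cos\mu)\Re(p)$, and positivity of $\cos\mu$ is \emph{not} automatic on $[0,\pi]$. This is exactly why the admissible values of $\mu$ must be cut down to the set on which $\cos\mu\geqq0$, i.e. $\left[0,\tfrac{\pi}{2}\right]\cup\left[\tfrac{3\pi}{2},2\pi\right)$, rather than all of $[0,\pi]$. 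Keeping track of this interchange of $\sin\mu$ and $\cos\mu$, together with the corresponding restriction of the $\mu$-range, is the one point that requires care.
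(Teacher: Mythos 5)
Your proposal is correct and is essentially the argument the paper itself omits: the paper dismisses the proof of Lemma \ref{Lem4} as ``similar to that of Lemma \ref{Lem3},'' and your route---rotating Lemma \ref{Lem2} by $90^{\circ}$ to obtain the real-axis criterion $\Re\left(e^{i\mu}\left(1-2ze^{-i\mu}\cos\nu+z^{2}e^{-2i\mu}\right)\varphi^{\prime}(z)\right)\geqq0$, then cancelling the denominator so that the condition reduces to $(\cos\mu)\,\Re\left(p(z)\right)\geqq0$---is exactly that omitted computation, in parallel with the $(\sin\mu)\,\Re\left(p(z)\right)$ step of Lemma \ref{Lem3}. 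You also correctly diagnose the one delicate point: the paper's stated range $\mu\in\left[0,\frac{\pi}{2}\right]\cup\left[\frac{3\pi}{2},\pi\right]$ is a misprint for the set on which $\cos\mu\geqq0$, namely $\left[0,\frac{\pi}{2}\right]\cup\left[\frac{3\pi}{2},2\pi\right)$.
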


\begin{lemma}\label{Lem5}{\rm{(See \cite{CP})}}
Let $f$ be an analytic function  in $\D$ with $f(0)=0$ and $f^{\prime}(0)\neq0$. Suppose also that
\begin{equation}\label{24}
\kappa(z)=\frac{z}{(1+ze^{i\theta_{1}})(1+e^{i\theta_{2}})}\quad(\theta_{1},\theta_{2}\in\R).
\end{equation}
If
\begin{equation}
\Re\left(\frac{zf^{\prime}(z)}{\kappa(z)}\right)>0\quad(z\in\D).
\end{equation}
Then $f$ is convex in the direction of real axis.
\end{lemma}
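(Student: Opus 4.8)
The plan is to rewrite the hypothesis as a Carath\'eodory condition and then exhibit $f$ in the integral form already treated in Lemma \ref{Lem4}. First I would clear the kernel: since $\kappa(z)=z/[(1+ze^{i\theta_1})(1+ze^{i\theta_2})]$, the hypothesis reads
\[
\Re\,p(z)>0,\qquad p(z):=\frac{zf^{\prime}(z)}{\kappa(z)}=(1+ze^{i\theta_1})(1+ze^{i\theta_2})f^{\prime}(z)\quad(z\in\D),
\]
so that $p$ is analytic in $\D$ with positive real part. Solving for $f^{\prime}$ and integrating from the origin (using $f(0)=0$) gives
\[
f(z)=\int_{0}^{z}\frac{p(\zeta)}{(1+\zeta e^{i\theta_1})(1+\zeta e^{i\theta_2})}\,d\zeta\quad(z\in\D).
\]

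The second step is to recognize this integrand as a special case of the kernel in Lemma \ref{Lem4}. Taking $\mu=0$ there collapses the numerator $\cos\mu\,p(\zeta)+i\sin\mu$ to exactly $p(\zeta)$ and the denominator $e^{i\mu}(1-2\zeta e^{-i\mu}\cos\nu+\zeta^{2}e^{-2i\mu})$ to $1-2\zeta\cos\nu+\zeta^{2}$. It then remains to choose $\nu\in[0,\pi]$ so that
\[
(1+\zeta e^{i\theta_1})(1+\zeta e^{i\theta_2})=1-2\zeta\cos\nu+\zeta^{2},
\]
that is, so that the two unimodular zeros $-e^{-i\theta_1}$ and $-e^{-i\theta_2}$ of the denominator form a conjugate pair $e^{\pm i\nu}$; comparing the middle coefficients then determines $\cos\nu$, and hence $\nu$, from $\theta_1$ and $\theta_2$. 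Once this identification is in force, $f$ is literally of the form treated in Lemma \ref{Lem4}, and that lemma delivers at once that $f$ is univalent and convex in the direction of the real axis.

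I expect the coefficient matching to be the main obstacle. One must confirm that the proportionality constant relating $(1+\zeta e^{i\theta_1})(1+\zeta e^{i\theta_2})$ to the quadratic $e^{i\mu}(1-2\zeta e^{-i\mu}\cos\nu+\zeta^{2}e^{-2i\mu})$ of Lemma \ref{Lem2} is a \emph{positive real} number, since only a positive factor transports the sign of $\Re\,p>0$ into the inequality required by Lemma \ref{Lem2}; a merely unimodular factor would rotate the positivity and spoil the conclusion. Equivalently, if one argues straight from Lemma \ref{Lem2}, the rotation $\varphi=if$ (which interchanges the real and imaginary directions of convexity) reduces the target to finding admissible $\mu,\nu$ for which $\Re\bigl(-ie^{i\mu}(1-2ze^{-i\mu}\cos\nu+z^{2}e^{-2i\mu})\,(if)^{\prime}\bigr)\ge0$ collapses to $\Re\,p>0$; the same positivity-of-the-constant check is the crux, and verifying that the induced $\mu,\nu$ lie in the admissible ranges completes the argument.
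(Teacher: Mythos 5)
You have correctly reduced the problem to a single question --- whether $(1+\zeta e^{i\theta_1})(1+\zeta e^{i\theta_2})$ is a \emph{positive} multiple of an admissible Royster--Ziegler kernel --- but you leave that question unresolved, and it is exactly there that the proof breaks. Carry out the comparison you postpone: if $(1+\zeta e^{i\theta_1})(1+\zeta e^{i\theta_2})=c\,e^{i\mu}\left(1-2\zeta e^{-i\mu}\cos\nu+\zeta^{2}e^{-2i\mu}\right)$ with $c>0$, the constant terms give $ce^{i\mu}=1$, forcing $\mu=0$ and $c=1$, and the $\zeta^{2}$-terms then force $e^{i(\theta_1+\theta_2)}=1$. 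So the identification you need (whether through Lemma \ref{Lem4} with $\mu=0$, or through Lemma \ref{Lem2} applied to $if$) exists if and only if $\theta_1+\theta_2\equiv 0\pmod{2\pi}$, i.e.\ precisely when $-e^{-i\theta_1}$ and $-e^{-i\theta_2}$ really are a conjugate pair; for independent $\theta_1,\theta_2$, as in the statement, no admissible $\mu,\nu$ exist. Moreover, no alternative argument can close this gap, because the statement as printed is false. Take $\theta_1=\theta_2=\pi/2$ and $f(z)=\kappa(z)=z/(1+iz)^{2}=i\,k(-iz)$, where $k$ is the Koebe function: then $zf^{\prime}(z)/\kappa(z)=(1-iz)/(1+iz)$ has positive real part in $\D$, yet $f(\D)=\C\setminus\{iy:\ y\leqq-1/4\}$, and every horizontal line $\Im w=c$ with $c\leqq-1/4$ meets this domain in a disconnected set. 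What the hypothesis actually yields is that $f$ is convex in the direction $-(\theta_1+\theta_2)/2$, which is the real-axis direction only in the conjugate-pair case.

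For context: the paper offers no proof of this lemma (it is quoted from Pommerenke \cite{CP}), and Pommerenke's result concerns the kernel $z/[(1+ze^{i\theta})(1+ze^{-i\theta})]$, whose denominator $1+2z\cos\theta+z^{2}$ has real coefficients; the two independent angles here (and the dropped $z$ in the factor $(1+e^{i\theta_2})$) are transcription slips, and the paper itself only ever instantiates the lemma with conjugate pairs (Example \ref{e3} takes $\theta_1=\pi/6$, $\theta_2=-\pi/6$). Under the hypothesis $\theta_2=-\theta_1$ your argument does close, most cleanly in the second form you sketch: choosing $\mu=0$ and $\nu\in[0,\pi]$ with $\cos\nu=-\cos\theta_1$, one has $-i\left(1-2z\cos\nu+z^{2}\right)(if)^{\prime}(z)=(1+ze^{i\theta_1})(1+ze^{-i\theta_1})f^{\prime}(z)$, whose real part is positive by hypothesis, so Lemma \ref{Lem2} makes $if$ univalent and convex in the direction of the imaginary axis, i.e.\ $f$ is univalent and convex in the direction of the real axis. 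Routing through Lemma \ref{Lem2} rather than Lemma \ref{Lem4} also repairs a second, smaller defect of your write-up: your $p$ satisfies $p(0)=f^{\prime}(0)$, which is only assumed nonzero, so $p$ need not belong to $\mathcal{P}$ as the paper defines it, whereas Lemma \ref{Lem2} requires no normalization.
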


\begin{lemma}\label{Lem6}{\rm{(See \cite{RU})}}
Let $\xi$ and $\psi$ be convex and starlike functions, respectively, such
that $\xi(0)=\psi(0)=0$. Then for each $F$ analytic in $\D$ and satisfying $\Re\left(F(z)\right)>0$, we have
\begin{equation}
\Re\left(\frac{\psi(z)F(z)*\xi(z)}{\psi(z)*\xi(z)}\right)>0\quad(z\in\D).
\end{equation}
\end{lemma}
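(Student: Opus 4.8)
The plan is to deduce this from the fundamental convolution theorem of Ruscheweyh and Sheil-Small (the resolution of the Pólya--Schoenberg conjecture), which asserts that if $\xi$ is convex and $\psi$ is starlike with $\xi(0)=\psi(0)=0$, then $\xi*\psi$ is again starlike, and for \emph{every} analytic $F$ in $\D$ the quotient $(\psi F*\xi)/(\psi*\xi)$ takes all its values in the closed convex hull $\overline{\mathrm{co}}\,F(\D)$. Granting this, Lemma \ref{Lem6} is almost immediate. Since $\xi*\psi$ is starlike it has a simple zero at the origin and is nonvanishing on $\D\setminus\{0\}$, while $\psi F$ has a simple zero at the origin because $\Re F(0)>0$; hence the quotient extends analytically across $z=0$ with value $F(0)$ there. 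The hypothesis $\Re F>0$ places $F(\D)$ in the open right half-plane, so $\overline{\mathrm{co}}\,F(\D)$ lies in the closed right half-plane, whence $\Re\bigl((\psi F*\xi)/(\psi*\xi)\bigr)\geq 0$ throughout $\D$. A nonconstant analytic function whose real part is $\geq 0$ cannot realize the boundary value $0$ in its real part, by the minimum principle for harmonic functions, so the inequality is in fact strict; and if the quotient were constant its value would be $F(0)$, still in the right half-plane. This is the route I would take first, and in the paper it amounts to citing \cite{RU} for the convex-hull statement together with the two-line deduction above.

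If instead one wants a self-contained argument for the displayed convex-hull statement, I would build it from the Herglotz representation combined with a duality reduction. After discarding the additive constant $i\,\Im F(0)$ (which merely shifts the quotient by the same imaginary constant and so does not affect its real part) and rescaling, one writes $F$ via Herglotz as a convex average of the half-plane maps $L_x(z)=\frac{1+xz}{1-xz}$ with $|x|=1$. Because $F\mapsto(\psi F*\xi)/(\psi*\xi)$ is affine in $F$ for each fixed $z$, it suffices to establish the inequality when $F=L_x$ for a single $|x|=1$; this is exactly the statement that the extreme points of $\mathcal{P}$ are the $L_x$. For that extreme case I would use the factorization $\psi L_x=\frac{2\psi}{1-xz}-\psi$ and the convolution identities available for convex $\xi$ and starlike $\psi$, reducing the claim to the assertion that $(\psi L_x*\xi)/(\psi*\xi)$ is subordinate to $L_x$ and hence has positive real part.

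The genuine obstacle is precisely this base case: showing that convolution against a convex function $\xi$ preserves subordination to $L_x$, equivalently that $\xi*\psi$ is starlike and the quotient stays in the half-plane. That is the analytic heart of the Pólya--Schoenberg theory and rests on Ruscheweyh's duality principle, namely that a convex functional on $\mathcal{P}$ is controlled by its values at the extreme functions $L_x$. Everything else surrounding it — the Herglotz averaging and the passage from $\Re\geq 0$ to $\Re>0$ — is routine, so the real content of the lemma is imported through \cite{RU}.
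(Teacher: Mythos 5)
Your proposal is correct and matches the paper's treatment: the paper offers no proof of this lemma at all, quoting it directly from Ruscheweyh and Sheil-Small \cite{RU}, which is exactly where your argument imports its content (the convex-hull convolution theorem), the rest being your short and correct deduction of the strict inequality via the value $F(0)$ at the origin and the minimum principle. Since both routes rest entirely on \cite{RU}, there is nothing further to reconcile.
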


\vskip.20in
\section{Main results}

We begin by deriving the following result.

\begin{theorem}\label{t1}
Let $f_{j}=h_{j}+\overline{g}_{j}\in \mathcal{S}_{\mathcal{H}}^{0}\ (j=1,2)$ with  $f_{j}\in \mathcal{W}_{\mathcal{H}}^{1}(\phi)$.
If $\phi$ is convex in the direction of imaginary axis, then $f_{3}=tf_{1}+(1-t)f_{2}\ (0\leqq t\leqq1)$
 is  univalent and convex in the direction of  imaginary axis.
\end{theorem}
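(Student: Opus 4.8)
The plan is to combine the local-univalence criterion of Lemma~\ref{Lem7} with the shear characterization of Lemma~\ref{Lem1}. The crucial observation is that membership in $\mathcal{W}_{\mathcal{H}}^{1}(\phi)$ is an affine constraint on the pair $(h,g)$, namely $h+g=\phi$, and such a constraint is automatically preserved under convex combinations. So the theorem should reduce to checking two things: that $f_{3}$ is locally univalent and sense-preserving, and that its associated analytic map $h_{3}+g_{3}$ is exactly $\phi$.

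First I would establish local univalence. The hypothesis $f_{j}\in\mathcal{W}_{\mathcal{H}}^{1}(\phi)$ means $h_{j}+g_{j}=\phi$, which can be rewritten as $h_{j}-\lambda g_{j}=\phi$ with $\lambda=-1$. Since $\abs{-1}=1$, the hypotheses of Lemma~\ref{Lem7} are satisfied with $\varphi=\phi$ and this choice of $\lambda$, so $f_{3}=tf_{1}+(1-t)f_{2}$ is locally univalent; moreover the chain of inequalities in the proof of that lemma shows $\abs{\omega_{3}}<1$, which is precisely the sense-preserving condition. Next I would record that the defining relation passes to the combination: with $h_{3}=th_{1}+(1-t)h_{2}$ and $g_{3}=tg_{1}+(1-t)g_{2}$, linearity gives
\[
h_{3}+g_{3}=t(h_{1}+g_{1})+(1-t)(h_{2}+g_{2})=t\phi+(1-t)\phi=\phi,
\]
so $f_{3}\in\mathcal{W}_{\mathcal{H}}^{1}(\phi)$ as well.

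Finally I would invoke Lemma~\ref{Lem1}. Since $f_{3}$ is a sense-preserving, locally univalent harmonic function and $h_{3}+g_{3}=\phi$ is, by hypothesis, an analytic univalent map of $\D$ onto a domain convex in the direction of the imaginary axis, Lemma~\ref{Lem1} yields at once that $f_{3}$ is univalent and convex in the direction of the imaginary axis. The one point demanding care is reconciling the sign conventions: the class $\mathcal{W}_{\mathcal{H}}^{1}$ is defined through $h+g$, whereas Lemma~\ref{Lem7} is stated for $h-\lambda g$, so one must deliberately take $\lambda=-1$ when applying the local-univalence lemma while reading the convexity hypothesis off $h+g$ for Lemma~\ref{Lem1}. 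Beyond this bookkeeping I do not expect any substantive obstacle, since the essential content is simply that the affine structure of $\mathcal{W}_{\mathcal{H}}^{1}(\phi)$ makes the convexity-in-direction property automatically inherited by convex combinations.
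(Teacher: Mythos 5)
Your proposal is correct and follows essentially the same route as the paper: take $\lambda=-1$ in Lemma~\ref{Lem7} to obtain local univalence, note that $h_{3}+g_{3}=t\phi+(1-t)\phi=\phi$ by linearity, and conclude via Lemma~\ref{Lem1}. Your extra remark about the sign bookkeeping ($h+g=\phi$ versus $h-\lambda g=\varphi$) is exactly the observation implicit in the paper's choice of $\lambda=-1$, so there is no substantive difference.
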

\begin{proof}
By taking $\lambda=-1$ in Lemma \ref{Lem7}, we know that $f_{3}$ is locally univalent. Since
$h_{j}+g_{j}=\phi\ (j=1,2)$, we have
\begin{equation*}
h_{3}+g_{3}=\left[th_{1}+(1-t)h_{2}\right]+\left[tg_{1}+(1-t)g_{2}\right]=\phi,
\end{equation*}
which is convex in the direction of imaginary axis by the assumption.
Thus, by Lemma \ref{Lem1}, we know that $f_{3}$ is univalent and convex in the direction of imaginary axis.
\end{proof}

\begin{rem}
{\rm  Theorem \ref{t1} gives a partial answer to Problem \ref{p1}.}
\end{rem}

In view of Theorem \ref{t1} and Lemma \ref{Lem3}, we obtain the following result with  special choice of $\phi$.
\begin{cor}\label{c1}
Suppose that $\gamma\in[0,1]$, $\alpha\in[-1,1]$ and $\theta\in(0,\pi)$. Let $f_{j}=h_{j}+\overline{g_{j}}\in \mathcal{S}_{\mathcal{H}}^{0}$ with
\begin{equation*}
f_{j}\in \mathcal{W}_{\mathcal{H}}^{1}\left(\gamma\frac{z(1-\alpha z)}{1-z^2}+(1-\gamma)\frac{1}{2i\sin\theta}\log\left(\frac{1+ze^{i\theta}}{1+ze^{-i\theta}}\right)\right)\quad(j=1,2).
\end{equation*}
Then
$f_{3}=tf_{1}+(1-t)f_{2}\ (0\leqq t\leqq1)$
 is  univalent and convex in the direction of  imaginary axis.
\end{cor}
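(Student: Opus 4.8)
The plan is to reduce everything to Theorem~\ref{t1}. Since each $f_j\in\mathcal{S}_{\mathcal{H}}^0$ is assumed to lie in $\mathcal{W}_{\mathcal{H}}^1(\phi)$ for the common analytic function
$$\phi(z)=\gamma\,\frac{z(1-\alpha z)}{1-z^2}+(1-\gamma)\,\frac{1}{2i\sin\theta}\log\!\left(\frac{1+ze^{i\theta}}{1+ze^{-i\theta}}\right),$$
it suffices to prove that this particular $\phi$ is univalent and convex in the direction of the imaginary axis; the assertion for $f_3=tf_1+(1-t)f_2$ then follows immediately from Theorem~\ref{t1}. Note that $\phi(0)=0$ and $\phi'(0)=1$, so $\phi\in\mathcal{A}$ and the membership $f_j\in\mathcal{W}_{\mathcal{H}}^1(\phi)$ is meaningful.

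To establish convexity of $\phi$ I would apply Lemma~\ref{Lem3} with the single choice $\mu=\nu=\tfrac{\pi}{2}$. For these values $\cos\mu=0$, $\sin\mu=1$, and the denominator in Lemma~\ref{Lem3} collapses to $e^{i\mu}(1-2\zeta e^{-i\mu}\cos\nu+\zeta^2e^{-2i\mu})=i(1-\zeta^2)$, so the required integral representation reduces to $\phi'(z)=p(z)/(1-z^2)$; that is, it remains only to exhibit $p(z)=(1-z^2)\phi'(z)\in\mathcal{P}$. A direct differentiation yields the two clean formulas
$$\frac{d}{dz}\,\frac{z(1-\alpha z)}{1-z^2}=\frac{1-2\alpha z+z^2}{(1-z^2)^2},\qquad \frac{d}{dz}\,\frac{1}{2i\sin\theta}\log\frac{1+ze^{i\theta}}{1+ze^{-i\theta}}=\frac{1}{1+2z\cos\theta+z^2},$$
whence
$$p(z)=\gamma\,\frac{1-2\alpha z+z^2}{1-z^2}+(1-\gamma)\,\frac{1-z^2}{1+2z\cos\theta+z^2}.$$

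The decisive point is that both summands have positive real part separately, so their convex combination does too. For the first I would use the partial fraction identity $\frac{1-2\alpha z+z^2}{1-z^2}=-1+\frac{1-\alpha}{1-z}+\frac{1+\alpha}{1+z}$ together with $\Re\frac{1}{1-z}>\tfrac12$ and $\Re\frac{1}{1+z}>\tfrac12$ on $\D$; since $\alpha\in[-1,1]$ makes the coefficients $1\mp\alpha$ nonnegative, the real part exceeds $-1+\tfrac12(1-\alpha)+\tfrac12(1+\alpha)=0$. For the second, the analogous identity $\frac{1-z^2}{1+2z\cos\theta+z^2}=-1+\frac{1}{1+ze^{i\theta}}+\frac{1}{1+ze^{-i\theta}}$ (valid because $\theta\in(0,\pi)$ keeps the poles $-e^{\pm i\theta}$ distinct) together with $\Re\frac{1}{1+ze^{\pm i\theta}}>\tfrac12$ again forces the real part above $0$. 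As $\gamma,1-\gamma\geqq0$ and $p(0)=1$, these estimates give $p\in\mathcal{P}$.

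Lemma~\ref{Lem3} then shows that $\phi$ is univalent and convex in the direction of the imaginary axis, and Theorem~\ref{t1} completes the proof. I expect the only genuine obstacle to be the choice of parameters: one must observe that the \emph{same} pair $\mu=\nu=\tfrac{\pi}{2}$ simultaneously normalizes \emph{both} building blocks to the common denominator $1-z^2$, for only then does the problem linearize and split into the two elementary positive-real-part estimates above. An ill-matched choice of $\mu,\nu$ would leave the incompatible quadratic denominators $(1-z^2)^2$ and $1+2z\cos\theta+z^2$ and block the combination.
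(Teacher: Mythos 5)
Your proposal is correct and takes essentially the same route as the paper: both reduce to Theorem~\ref{t1}, identify the same function $p(z)=(1-z^2)\phi^{\prime}(z)=\gamma\frac{1-2\alpha z+z^2}{1-z^2}+(1-\gamma)\frac{1-z^2}{1+2z\cos\theta+z^2}$, and apply Lemma~\ref{Lem3} with the choice $\mu=\nu=\frac{\pi}{2}$. The only difference is in the verification that $\Re\left(p(z)\right)>0$: the paper computes the real part explicitly via $\frac{1}{2}\left(p+\overline{p}\right)$, while you use partial fractions together with $\Re\left(\frac{1}{1+w}\right)>\frac{1}{2}$ for $\abs{w}<1$ --- both are valid elementary arguments.
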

\begin{proof}
Define the function $\widetilde{p}(z)$ by
\begin{equation*}
\widetilde{p}(z)=\gamma\frac{1-2\alpha z+z^2}{1-z^2}+(1-\gamma)\frac{1-z^2}{(1+ze^{i\theta})(1+ze^{-i\theta})}\quad(z\in\D),
\end{equation*}
where $0\leqq\gamma\leqq1$, $-1\leqq\alpha\leqq1$ and $0<\theta<\pi$. We know that $\widetilde{p}(z)$ is analytic in $\D$,   $\widetilde{p}(0)=1$, and
\begin{equation*}\begin{split}
\Re\left(\widetilde{p}(z)\right)&=\frac{1}{2}\left(\widetilde{p}(z)+\overline{\widetilde{p}(z)}\right)\\
&=\gamma\frac{(1-\abs{z}^2)\left(1+\abs{z}^2-2\alpha\Re(z)\right)}{{\abs{1-z^2}}^2}+(1-\gamma)\frac{(1-\abs{z}^2)\left(1+\abs{z}^2+2\cos\theta\Re(z)\right)}
{{\abs{(1+ze^{i\theta})(1+ze^{-i\theta})}}^2}
>0.
\end{split}
\end{equation*}
By taking $\mu=\nu=\frac{\pi}{2}$ and $p=\widetilde{p}(z)$ in Lemma \ref{Lem3}, we find that
\begin{equation*}
\phi=\gamma\frac{z(1-\alpha z)}{1-z^2}+(1-\gamma)\frac{1}{2i\sin\theta}\log\left(\frac{1+ze^{i\theta}}{1+ze^{-i\theta}}\right)
 \end{equation*}
 is convex in the direction of imaginary axis. Therefore, by Theorem \ref{t1}, we know that $f_{3}$ is univalent and convex in the direction of imaginary axis.
\end{proof}

The next theorem deals with the linear combination of $f_{1}$ and $f_{2}$
with $f_{1},\ f_2\in \mathcal{W}_{\mathcal{H}}^{-1}(\phi)$.

\begin{theorem}\label{t2}
Let $f_{j}=h_{j}+\overline{g_{j}}\in \mathcal{S}_{\mathcal{H}}^{0}\ (j=1,2)$ with
 $f_{j}\in \mathcal{W}_{\mathcal{H}}^{-1}(\phi)$. If $\phi$ is convex in the direction of real axis,
then
$f_{3}=tf_{1}+(1-t)f_{2}\ (0\leqq t\leqq1)$
is  univalent and convex in the direction of  real axis.
\end{theorem}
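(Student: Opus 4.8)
The plan is to mirror the proof of Theorem \ref{t1} verbatim, with the roles of $h+g$ and ``imaginary axis'' replaced by $h-g$ and ``real axis'' respectively. The starting observation is that, by definition, $f_{j}\in\mathcal{W}_{\mathcal{H}}^{-1}(\phi)$ means $h_{j}+(-1)g_{j}=h_{j}-g_{j}=\phi$ for $j=1,2$.

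First I would establish local univalence of $f_{3}$ by invoking Lemma \ref{Lem7} with the choice $\lambda=1$ (rather than $\lambda=-1$ as in Theorem \ref{t1}). Since each $f_{j}\in\mathcal{S}_{\mathcal{H}}^{0}$ is sense\nbd preserving we have $\abs{\omega_{j}}=\abs{\lambda\omega_{j}}<1$, and the requirement $h_{j}-\lambda g_{j}=\varphi$ of Lemma \ref{Lem7} becomes $h_{j}-g_{j}=\phi$, which is precisely the defining property of $\mathcal{W}_{\mathcal{H}}^{-1}(\phi)$. Hence Lemma \ref{Lem7} applies with $\varphi=\phi$ and guarantees that $f_{3}=tf_{1}+(1-t)f_{2}$ is locally univalent (indeed $\abs{\omega_{3}}<1$) for every $t\in[0,1]$.

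Next I would exploit the linearity of the map $f\mapsto h-g$. Because $h_{j}-g_{j}=\phi$ for both indices,
\[
h_{3}-g_{3}=t(h_{1}-g_{1})+(1-t)(h_{2}-g_{2})=t\phi+(1-t)\phi=\phi,
\]
which by hypothesis is an analytic univalent mapping convex in the direction of the real axis. I would then close the argument with the real\nbd axis half of Lemma \ref{Lem1}: a sense\nbd preserving harmonic $f=h+\overline{g}$ is univalent and convex in the direction of the real axis if and only if $h-g$ is a conformal univalent mapping convex in that direction. Applying this to $f_{3}$ (locally univalent and sense\nbd preserving by the previous step) with $h_{3}-g_{3}=\phi$ yields that $f_{3}$ is univalent and convex in the direction of the real axis.

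I do not anticipate a genuine obstacle here, since the result is the exact analogue of Theorem \ref{t1}. The only points demanding care are bookkeeping: choosing $\lambda=1$ so that the constraint defining $\mathcal{W}_{\mathcal{H}}^{-1}(\phi)$ matches the hypothesis of Lemma \ref{Lem7}, and consistently invoking the real\nbd axis (as opposed to imaginary\nbd axis) versions of Lemmas \ref{Lem7} and \ref{Lem1}.
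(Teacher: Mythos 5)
Your proposal is correct and follows essentially the same route as the paper's own proof: the paper likewise invokes Lemma \ref{Lem7} with $\lambda=1$ for local univalence, observes that $h_{3}-g_{3}=\phi$ by linearity, and concludes via the real-axis part of Lemma \ref{Lem1}. Your additional bookkeeping remarks (matching the sign conventions of $\mathcal{W}_{\mathcal{H}}^{-1}(\phi)$ with the hypothesis of Lemma \ref{Lem7}) are accurate and consistent with the paper.
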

\begin{proof}
By setting $\lambda=1$ in Lemma \ref{Lem7}, we know that $f_{3}$ is locally univalent. Since $h_{j}-g_{j}=\phi\ (j=1,2)$,
we find that
\begin{equation*}
h_{3}-g_{3}=\left[th_{1}+(1-t)h_{2}\right]-\left[tg_{1}+(1-t)g_{2}\right]=\phi,
\end{equation*}
which is convex in the direction of real axis by the assumption. Thus, by Lemma \ref{Lem1}, we know that $f_{3}$ is univalent and convex in the direction of real axis.
\end{proof}

Combining Theorem \ref{t2} and Lemma \ref{Lem4}, we obtain the following corollary.
\begin{cor}\label{c2}
Suppose that $\gamma\in[0,1]$ and $\beta\in[-2,2]$. Let $f_{j}=h_{j}+\overline{g_{j}}\in \mathcal{S}_{\mathcal{H}}^{0}$ with
\begin{equation*}
f_{j}\in \mathcal{W}_{\mathcal{H}}^{-1}\left(\gamma\left(\frac{1}{2}\log\frac{1+z}{1-z}\right)+(1-\gamma)\frac{z}{1+\beta z+z^2}\right)\quad(j=1,2).
\end{equation*}
Then
$f_{3}=tf_{1}+(1-t)f_{2}\ (0\leqq t\leqq1)$
 is  univalent and convex in the direction of  real axis.
\end{cor}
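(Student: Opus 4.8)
The plan is to follow the same strategy as in the proof of Corollary \ref{c1}, but now invoking Lemma \ref{Lem4} and Theorem \ref{t2} in place of Lemma \ref{Lem3} and Theorem \ref{t1}. Since $f_{j}\in\mathcal{W}_{\mathcal{H}}^{-1}(\phi)$ with
\[
\phi(z)=\gamma\left(\frac{1}{2}\log\frac{1+z}{1-z}\right)+(1-\gamma)\frac{z}{1+\beta z+z^{2}},
\]
Theorem \ref{t2} reduces everything to showing that this $\phi$ is univalent and convex in the direction of the real axis, and for that I would exhibit $\phi$ as an instance of the integral representation in Lemma \ref{Lem4}. First I would differentiate, obtaining
\[
\phi^{\prime}(z)=\gamma\frac{1}{1-z^{2}}+(1-\gamma)\frac{1-z^{2}}{(1+\beta z+z^{2})^{2}},
\]
and then choose the admissible parameters $\mu=0\in[0,\frac{\pi}{2}]$ and $\nu\in[0,\pi]$ with $\cos\nu=-\beta/2$; this choice of $\nu$ is legitimate precisely because $\beta\in[-2,2]$ forces $-\beta/2\in[-1,1]$, and it makes $1-2\zeta\cos\nu+\zeta^{2}=1+\beta\zeta+\zeta^{2}$. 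With $\mu=0$ the integrand of Lemma \ref{Lem4} collapses to $p(\zeta)/(1+\beta\zeta+\zeta^{2})$, so matching it against $\phi^{\prime}$ dictates the definition
\[
p(z)=(1+\beta z+z^{2})\,\phi^{\prime}(z)=\gamma\frac{1+\beta z+z^{2}}{1-z^{2}}+(1-\gamma)\frac{1-z^{2}}{1+\beta z+z^{2}}.
\]

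The substance of the argument is then to verify that this $p$ belongs to the class $\mathcal{P}$. Analyticity in $\D$ is immediate once one notes that $1-z^{2}$ vanishes only at $\pm1$ and that the roots of $1+\beta z+z^{2}$ are $\tfrac{1}{2}(-\beta\pm i\sqrt{4-\beta^{2}})$, which have modulus $1$ for every $\beta\in[-2,2]$ and hence lie on $\partial\D$; and clearly $p(0)=1$. For the positivity I would compute the real part of each summand separately, a calculation parallel to the one carried out in Corollary \ref{c1}, which gives
\[
\Re\left(\frac{1+\beta z+z^{2}}{1-z^{2}}\right)=\frac{(1-\abs{z}^{2})(1+\abs{z}^{2}+\beta\Re(z))}{\abs{1-z^{2}}^{2}},\qquad
\Re\left(\frac{1-z^{2}}{1+\beta z+z^{2}}\right)=\frac{(1-\abs{z}^{2})(1+\abs{z}^{2}+\beta\Re(z))}{\abs{1+\beta z+z^{2}}^{2}}.
\]

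I would then observe that, using $\abs{\beta}\leqq2$ and $\abs{\Re(z)}\leqq\abs{z}$,
\[
1+\abs{z}^{2}+\beta\Re(z)\geqq 1-2\abs{z}+\abs{z}^{2}=(1-\abs{z})^{2}>0\qquad(z\in\D),
\]
so that both summands have strictly positive real part on $\D$; consequently $\Re(p(z))>0$ and $p\in\mathcal{P}$. The main obstacle I anticipate is exactly this positivity step: it is the only genuinely delicate point, and it is where the hypothesis $\beta\in[-2,2]$ is used twice over, first to place the poles of $p$ on the unit circle so that $p$ is analytic, and then to secure the lower bound $(1-\abs{z})^{2}$. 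Once $p\in\mathcal{P}$ has been established, Lemma \ref{Lem4} shows that $\phi$ is univalent and convex in the direction of the real axis, and Theorem \ref{t2} then delivers the asserted univalence and convexity in the direction of the real axis of $f_{3}=tf_{1}+(1-t)f_{2}$.
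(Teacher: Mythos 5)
Your proof is correct and takes essentially the same route as the paper: the same parameter choices $\mu=0$, $\cos\nu=-\beta/2$ in Lemma \ref{Lem4}, the same function $p(z)=(1+\beta z+z^{2})\phi^{\prime}(z)$ (the paper's $\widehat{p}$), and the same conclusion via Theorem \ref{t2}. The only cosmetic difference is that you compute $\Re\left(\frac{1-z^{2}}{1+\beta z+z^{2}}\right)$ directly and make the lower bound $1+\abs{z}^{2}+\beta\Re(z)\geqq(1-\abs{z})^{2}>0$ explicit, whereas the paper obtains the same quantity as $\Re\left(q(z)\right)/\abs{q(z)}^{2}$ with $q(z)=\frac{1+\beta z+z^{2}}{1-z^{2}}$ and leaves the positivity implicit.
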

\begin{proof}
Let
\begin{equation*}
\widehat{p}(z)=\gamma \frac{1+\beta z+z^2}{1-z^2}+(1-\gamma)\frac{1-z^2}{1+\beta z+z^2}\quad(z\in\D;\ 0\leqq\gamma\leqq1;\ -2\leqq\beta\leqq2),
\end{equation*}
we know that $\widehat{p}(z)$ is analytic in $\D$ with $\widehat{p}(0)=1$. If we define
\begin{equation*}
q(z)=\frac{1+\beta z+z^2}{1-z^2}\quad(z\in\D),
\end{equation*}
then
\begin{equation*}
\Re\left(q(z)\right)=\frac{(1-\abs{z}^2)\left(1+\abs{z}^2+\beta\Re(z)\right)}{\abs{1-z^2}^2}>0\quad(z\in\D).
\end{equation*}
Thus,
\begin{equation*}
\Re\left(\widehat{p}(z)\right)=\gamma\Re\left(q(z)\right)+(1-\gamma)\frac{\Re\left(q(z)\right)}{\abs{q(z)}^2}>0\quad(z\in\D).
\end{equation*}
By taking $\mu=0$, $\cos\nu=-\frac{\beta}{2}$ and $p=\widehat{p}(z)$ in Lemma \ref{Lem4}, we get
\begin{equation*}
\phi=\gamma\left(\frac{1}{2}\log\frac{1+z}{1-z}\right)+(1-\gamma)\frac{z}{1+\beta z+z^2}\quad(0\leqq\gamma\leqq1)
\end{equation*}
is convex in the direction of real axis. Thus, by Theorem \ref{t2}, we know that $f_{3}$ is univalent and convex in the direction of real axis.
\end{proof}

By virtue of Theorems \ref{t1}, \ref{t2} and Lemma \ref{Lem03}, we get the following generalization of Theorems \ref{t1}, \ref{t2}, here we choose to omit the details
of proof.

\begin{theorem}\label{t03}
Let $f_{j}=h_{j}+\overline{g_{j}}\in \mathcal{S}_{\mathcal{H}}^{0}\ (j=1,2)$ with
 $f_{j}\in \mathcal{W}_{\mathcal{H}}^{\lambda}(\phi)$. If $\phi$ is convex in the direction $-\frac{1}{2}\arg\lambda$,
then
$f_{3}=tf_{1}+(1-t)f_{2}\ (0\leqq t\leqq1)$
is  univalent and convex in the direction $-\frac{1}{2}\arg\lambda$.
\end{theorem}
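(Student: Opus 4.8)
The plan is to run the same three-step argument used for Theorems~\ref{t1} and~\ref{t2}, but with the two special relations $h_j\pm g_j=\phi$ replaced by the single relation $h_j+\lambda g_j=\phi$ coming from membership in $\mathcal{W}_{\mathcal{H}}^{\lambda}(\phi)$, and with Lemma~\ref{Lem1} replaced by its directional refinement, Lemma~\ref{Lem03}. Concretely, the proof splits into (i) local univalence of $f_3$, (ii) preservation of the defining relation under the combination, and (iii) the shear characterisation of convexity in a prescribed direction.

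First I would settle local univalence. Rewriting the hypothesis $h_j+\lambda g_j=\phi$ as $h_j-(-\lambda)g_j=\phi$ and noting $\abs{-\lambda}=1$, I apply Lemma~\ref{Lem7} with $-\lambda$ playing the role of the unimodular constant there and with $\varphi=\phi$. Since $f_1,f_2\in\mathcal{S}_{\mathcal{H}}^{0}$, the lemma yields $\abs{\omega_3}<1$, so $f_3$ is sense\nbd-preserving and locally univalent. This is the only step that uses the univalence of $f_1,f_2$ in an essential way.

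Next, since $h_3=th_1+(1-t)h_2$ and $g_3=tg_1+(1-t)g_2$, linearity gives
\begin{equation*}
h_3+\lambda g_3=t\bigl(h_1+\lambda g_1\bigr)+(1-t)\bigl(h_2+\lambda g_2\bigr)=t\phi+(1-t)\phi=\phi,
\end{equation*}
so the combination satisfies the very same relation with the very same $\phi$. Finally I fix the direction $\gamma$ by $e^{2i\gamma}=-\lambda$; then $h_3-e^{2i\gamma}g_3=h_3+\lambda g_3=\phi$, which is univalent and convex in the direction $\gamma$ by hypothesis. Because $f_3$ is already known to be locally univalent, Lemma~\ref{Lem03} applies and shows that $f_3$ is univalent and convex in the direction $\gamma$. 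Specialising $\lambda=1$ (so $\gamma=\pi/2$, the imaginary axis) recovers Theorem~\ref{t1}, and $\lambda=-1$ (so $\gamma=0$, the real axis) recovers Theorem~\ref{t2}.

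There is essentially no obstacle beyond bookkeeping: the entire content is that the relation $h+\lambda g=\phi$ is affine in $(h,g)$ and hence survives the convex combination, after which Lemmas~\ref{Lem7} and~\ref{Lem03} do all the work. The single point demanding care is the matching of the unimodular constant — reconciling the convention $h-(-\lambda)g$ required by Lemma~\ref{Lem7} with the convention $h+\lambda g$ defining $\mathcal{W}_{\mathcal{H}}^{\lambda}(\phi)$, and choosing the branch of $\gamma$ in $e^{2i\gamma}=-\lambda$ so that it reduces correctly to the real and imaginary axes in the two base cases.
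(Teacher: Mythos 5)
Your three-step skeleton --- Lemma \ref{Lem7} applied with the unimodular constant $-\lambda$ for local univalence, linearity of the relation $h+\lambda g=\phi$ under convex combinations, then the directional shear characterization --- is exactly the argument the paper intends, since it presents Theorem \ref{t03} as following ``by virtue of Theorems \ref{t1}, \ref{t2} and Lemma \ref{Lem03}'' with details omitted. The first two steps are correct. The gap is in the last step, and it is not the harmless bookkeeping you make it out to be. Lemma \ref{Lem03} forces the direction $\gamma$ to satisfy $e^{2i\gamma}=-\lambda$, i.e.\ $\gamma\equiv\frac{\pi}{2}+\frac{1}{2}\arg\lambda\pmod{\pi}$, whereas the theorem asserts convexity in the direction $-\frac{1}{2}\arg\lambda$. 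Directions are only meaningful modulo $\pi$, and these two coincide exactly when $\arg\lambda\equiv\frac{\pi}{2}\pmod{\pi}$, i.e.\ only for $\lambda=\pm i$. No branch choice can reconcile them: changing the branch of $\arg$ shifts $\gamma$ by a multiple of $\pi$ and hence does not change the direction at all. So what you actually proved is convexity in the direction $\frac{1}{2}\arg(-\lambda)$; for the stated direction your method gives nothing, since with $\gamma=-\frac{1}{2}\arg\lambda$ one has $h_3-e^{2i\gamma}g_3=h_3-\overline{\lambda}\,g_3$, which is not $\phi$ in general, so Lemma \ref{Lem03} cannot be invoked.

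What your computation in fact exposes is that the direction printed in Theorem \ref{t03} is inconsistent with the paper's own claims: for $\lambda=1$ the formula $-\frac{1}{2}\arg\lambda$ gives the real axis while Theorem \ref{t1} (the $\lambda=1$ case) concerns the imaginary axis, and for $\lambda=-1$ it gives the imaginary axis while Theorem \ref{t2} concerns the real axis. Your $\gamma$ defined by $e^{2i\gamma}=-\lambda$ is the direction for which Theorem \ref{t03} genuinely generalizes Theorems \ref{t1} and \ref{t2}, and your proof of that corrected statement is sound and is the intended one. But a careful solution had to either prove the literal statement --- which this method cannot deliver, and which, given the mismatch with its advertised special cases, is presumably a misprint --- or explicitly record that the stated direction should read $\frac{1}{2}\arg(-\lambda)\equiv\frac{\pi}{2}+\frac{1}{2}\arg\lambda\pmod{\pi}$. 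Asserting, as your closing paragraph does, that the two conventions agree after ``choosing the branch'' is the precise point where the write-up is wrong.
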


\begin{rem}
{\rm By putting $\lambda=1$ and $\phi=z/(1-z)$ in Theorem \ref{t03}, we get the corresponding result obtained in \cite[Theorem 3]{WA}.}
\end{rem}

For the linear combination  $f_{3}=tf_{1}+(1-t)f_{2}$ with $f_{1}\in \mathcal{W}_{\mathcal{H}}^{1}(\phi)$ and $f_{2}\in \mathcal{W}_{\mathcal{H}}^{1}(\psi)$, some additional conditions are posed to guarantee the local univalency of $f_{3}$.

\begin{theorem}\label{t3}
Let $f_{j}=h_{j}+\overline{g_{j}}\in \mathcal{S}_{\mathcal{H}}^{0}\ (j=1,2)$ with
 $f_{1}\in \mathcal{W}_{\mathcal{H}}^{1}(\phi)$ and $f_{2}\in \mathcal{W}_{\mathcal{H}}^{1}(\psi)$. Suppose also that $\Re\left((1-\omega_{1}\overline{\omega}_{2})h_{1}^{\prime}\overline{h_{2}^{\prime}}\right)\geqq0$
and $t\phi+(1-t)\psi$ is convex in the direction of imaginary axis. Then
$f_{3}=tf_{1}+(1-t)f_{2}\ (0\leqq t\leqq1)$
 is  univalent and convex in the direction of imaginary axis.
\end{theorem}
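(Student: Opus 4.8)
The plan is to invoke Lemma~\ref{Lem1}: a sense-preserving harmonic $f_{3}=h_{3}+\overline{g_{3}}$ is univalent and convex in the direction of the imaginary axis precisely when $h_{3}+g_{3}$ is an analytic univalent map convex in that direction. Thus the argument splits into two tasks: verifying the required convexity of $h_{3}+g_{3}$, and establishing that $f_{3}$ is sense-preserving (locally univalent). The first task is immediate, since
\begin{equation*}
h_{3}+g_{3}=t(h_{1}+g_{1})+(1-t)(h_{2}+g_{2})=t\phi+(1-t)\psi,
\end{equation*}
which is convex in the direction of the imaginary axis by hypothesis. So the real content lies in the local univalence.

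Because $f_{1}\in\mathcal{W}_{\mathcal{H}}^{1}(\phi)$ and $f_{2}\in\mathcal{W}_{\mathcal{H}}^{1}(\psi)$ with $\phi\neq\psi$ in general, Lemma~\ref{Lem7} no longer applies, so I would argue directly that $\abs{\omega_{3}}<1$, where $\omega_{3}=g_{3}^{\prime}/h_{3}^{\prime}$. Writing $g_{j}^{\prime}=\omega_{j}h_{j}^{\prime}$ and abbreviating $a=th_{1}^{\prime}$, $b=(1-t)h_{2}^{\prime}$, I would expand
\begin{equation*}
\abs{h_{3}^{\prime}}^{2}-\abs{g_{3}^{\prime}}^{2}=\abs{a+b}^{2}-\abs{\omega_{1}a+\omega_{2}b}^{2},
\end{equation*}
and collect the result as
\begin{equation*}
(1-\abs{\omega_{1}}^{2})\abs{a}^{2}+(1-\abs{\omega_{2}}^{2})\abs{b}^{2}+2\Re\big((1-\omega_{1}\overline{\omega}_{2})\,a\overline{b}\big).
\end{equation*}

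The key observation is that the cross term equals $2t(1-t)\,\Re\big((1-\omega_{1}\overline{\omega}_{2})h_{1}^{\prime}\overline{h_{2}^{\prime}}\big)$, which is nonnegative precisely by the assumed condition. The remaining two terms are nonnegative because $f_{1}$ and $f_{2}$ are sense-preserving, so $\abs{\omega_{1}}<1$ and $\abs{\omega_{2}}<1$ in $\D$. For $0<t<1$ the first two terms are in fact strictly positive (recall $h_{j}^{\prime}$ never vanishes for a sense-preserving harmonic map, since $\abs{g_{j}^{\prime}}<\abs{h_{j}^{\prime}}$), which forces $\abs{h_{3}^{\prime}}>\abs{g_{3}^{\prime}}$; the endpoints $t=0,1$ reduce to the already univalent $f_{2},f_{1}$. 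Hence $f_{3}$ is sense-preserving, and Lemma~\ref{Lem1} completes the argument. The main obstacle is recognizing that the hypothesis $\Re\big((1-\omega_{1}\overline{\omega}_{2})h_{1}^{\prime}\overline{h_{2}^{\prime}}\big)\geqq0$ is exactly the cross term produced by this expansion; once the algebra is organized in this way, the two diagonal terms are controlled for free by the sense-preserving property of $f_{1}$ and $f_{2}$, and everything else falls into place.
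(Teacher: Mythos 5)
Your proof is correct, and its overall architecture is the same as the paper's: both arguments observe that $h_{3}+g_{3}=t(h_{1}+g_{1})+(1-t)(h_{2}+g_{2})=t\phi+(1-t)\psi$ is convex in the direction of the imaginary axis by hypothesis, and then invoke Lemma~\ref{Lem1}, so that the entire burden rests on the local univalence of $f_{3}$. The difference lies in how that step is discharged. The paper handles it with a one-line citation to \cite[Theorem 2]{WA}, which states precisely that the condition $\Re\left((1-\omega_{1}\overline{\omega}_{2})h_{1}^{\prime}\overline{h_{2}^{\prime}}\right)\geqq0$ guarantees local univalence of the linear combination of two sense-preserving mappings. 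You instead prove this implication from scratch, via the identity
\begin{equation*}
\abs{h_{3}^{\prime}}^{2}-\abs{g_{3}^{\prime}}^{2}
=(1-\abs{\omega_{1}}^{2})\abs{a}^{2}+(1-\abs{\omega_{2}}^{2})\abs{b}^{2}
+2t(1-t)\Re\left((1-\omega_{1}\overline{\omega}_{2})h_{1}^{\prime}\overline{h_{2}^{\prime}}\right),
\end{equation*}
where $a=th_{1}^{\prime}$, $b=(1-t)h_{2}^{\prime}$; the algebra checks out, the diagonal terms are strictly positive for $0<t<1$ (since $\abs{\omega_{j}}<1$ and $h_{j}^{\prime}\neq0$ for sense-preserving maps), the cross term is nonnegative exactly by hypothesis, and the endpoint cases $t=0,1$ are trivial. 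In effect you have reproved the cited external result rather than quoted it. What each approach buys: the paper's proof is shorter and defers to the literature, while yours is self-contained and has the expository advantage of revealing \emph{why} the hypothesis is the natural one --- it is exactly the cross term in the expansion of $\abs{h_{3}^{\prime}}^{2}-\abs{g_{3}^{\prime}}^{2}$, with the sense-preserving property of $f_{1},f_{2}$ controlling the rest for free.
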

\begin{proof}
Since $\Re\left((1-\omega_{1}\overline{\omega}_{2})h_{1}^{\prime}\overline{h_{2}^{\prime}}\right)\geqq0$,  from \cite[Theorem 2]{WA}, we know that $f_{3}$ is locally univalent. In view of
\begin{equation*}
h_{3}+g_{3}=\left[th_{1}+(1-t)h_{2}\right]+\left[tg_{1}+(1-t)g_{2}\right]=t\phi+(1-t)\psi
\end{equation*}
is convex in the direction of imaginary axis,  by Lemma \ref{Lem1}, we know that $f_{3}$ is univalent and convex in the direction of imaginary axis.
\end{proof}

For $f_{1}\in \mathcal{W}_{\mathcal{H}}^{-1}(\phi)$ and $f_{2}\in \mathcal{W}_{\mathcal{H}}^{-1}(\psi)$, by applying
the similar method of proof in Theorem \ref{t3}, we get the following result.
\begin{cor}\label{t4}
Let $f_{j}=h_{j}+\overline{g_{j}}\in \mathcal{S}_{\mathcal{H}}^{0}\ (j=1,2)$ with
 $f_{1}\in \mathcal{W}_{\mathcal{H}}^{-1}(\phi)$ and $f_{2}\in \mathcal{W}_{\mathcal{H}}^{-1}(\psi)$. Suppose that $\Re\left((1-\omega_{1}\overline{\omega}_{2})h_{1}^{\prime}\overline{h_{2}^{\prime}}\right)\geqq0$ and $t\phi+(1-t)\psi$ is convex in the direction of real axis.
Then
$f_{3}=tf_{1}+(1-t)f_{2}\ (0\leqq t\leqq1)$
 is  univalent and convex in the direction of real axis.
\end{cor}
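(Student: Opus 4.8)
The plan is to follow the proof of Theorem \ref{t3} almost verbatim, replacing $h+g$ by $h-g$ and the imaginary axis by the real axis throughout. Two ingredients must be supplied: the local univalence of $f_3$, and the Clunie--Sheil-Small criterion in its real-axis form (Lemma \ref{Lem1}).

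First I would establish local univalence. The hypothesis $\Re\left((1-\omega_{1}\overline{\omega}_{2})h_{1}^{\prime}\overline{h_{2}^{\prime}}\right)\geqq0$ is precisely the condition furnished by \cite[Theorem 2]{WA}, under which the combination $f_3 = tf_1 + (1-t)f_2$ is sense-preserving and locally univalent in $\D$. Crucially, that criterion is a statement about the dilatations and derivatives of two arbitrary sense-preserving harmonic mappings; it makes no reference to the value of $\lambda$ defining the classes $\mathcal{W}_{\mathcal{H}}^{\lambda}$. Hence it applies here with $\lambda=-1$ exactly as it did with $\lambda=1$ in Theorem \ref{t3}.

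Next I would compute the relevant analytic combination. Since $f_j\in\mathcal{W}_{\mathcal{H}}^{-1}(\cdot)$ means $h_1-g_1=\phi$ and $h_2-g_2=\psi$, linearity of $h_3=th_1+(1-t)h_2$ and $g_3=tg_1+(1-t)g_2$ yields
\[
h_3 - g_3 = t(h_1 - g_1) + (1-t)(h_2 - g_2) = t\phi + (1-t)\psi,
\]
which is convex in the direction of the real axis by assumption. Applying Lemma \ref{Lem1} in its real-axis version then shows that, because $f_3$ is sense-preserving and $h_3-g_3$ is a conformal univalent map of $\D$ convex in the direction of the real axis, $f_3$ itself is univalent and convex in the direction of the real axis.

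There is no substantive obstacle, as the argument is a direct transcription of Theorem \ref{t3}. The only point deserving verification is that the local-univalence result \cite[Theorem 2]{WA} is genuinely independent of the sign of $\lambda$, so that it carries over unchanged from the $\mathcal{W}_{\mathcal{H}}^{1}$ setting to the $\mathcal{W}_{\mathcal{H}}^{-1}$ setting; once this is noted, the remainder follows at once from the $h-g$ form of the Clunie--Sheil-Small criterion.
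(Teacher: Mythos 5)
Your proposal is correct and follows exactly the route the paper intends: the paper itself states that Corollary \ref{t4} follows "by applying the similar method of proof in Theorem \ref{t3}," i.e., local univalence from \cite[Theorem 2]{WA} (which, as you note, is insensitive to the choice of $\lambda$), then $h_3-g_3=t\phi+(1-t)\psi$ and the real-axis case of Lemma \ref{Lem1}. Nothing is missing; your verification of the $\lambda$-independence of the local-univalence criterion is the only point the paper leaves implicit, and you handle it correctly.
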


Now, we consider the linear combination of $f_{1}$
and $f_{2}$ with $f_{1}\in \mathcal{W}_{\mathcal{H}}^{1}(\phi)$ and  $f_{2}\in \mathcal{W}_{\mathcal{H}}^{-1}(\phi)$.

\begin{theorem}\label{t5}
Let $f_{j}=h_{j}+\overline{g_{j}}\in \mathcal{S}_{\mathcal{H}}^{0}\ (j=1,2)$ with
 $f_{1}\in \mathcal{W}_{\mathcal{H}}^{1}(\phi)$ and $f_{2}\in \mathcal{W}_{\mathcal{H}}^{-1}(\phi)$. Suppose also that $\Re\left(1+\frac{\omega_{1}-\overline{\omega}_{2}}{1-\omega_{1}\overline{\omega}_{2}}\right)\geqq0$ and there are two constants $\theta_{1},\theta_{2}\in\R$ such that
\begin{equation*}
\phi(z)=\int_{0}^{z}\frac{1}{(1+\zeta e^{i\theta_{1}})(1+\zeta e^{i\theta_{2}})}d\zeta\quad(z\in\D).
\end{equation*}
Then
$f_{3}=tf_{1}+(1-t)f_{2}\ (0\leqq t\leqq1)$
is  univalent and convex in the direction of  real axis.
\end{theorem}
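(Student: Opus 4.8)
The plan is to follow the two-step pattern of Theorems \ref{t1} and \ref{t2}: first use the dilatation hypothesis to see that $f_3$ is locally univalent, and then show that the analytic function $h_3-g_3$ is univalent and convex in the direction of the real axis, so that Lemma \ref{Lem1} delivers the conclusion. The two membership conditions supply the working relations. Writing $\omega_j=g_j^{\prime}/h_j^{\prime}$, from $f_1\in\mathcal{W}_{\mathcal{H}}^{1}(\phi)$ we have $h_1+g_1=\phi$, so $h_1^{\prime}(1+\omega_1)=\phi^{\prime}$ and $h_1^{\prime}=\phi^{\prime}/(1+\omega_1)$; from $f_2\in\mathcal{W}_{\mathcal{H}}^{-1}(\phi)$ we have $h_2-g_2=\phi$, so $h_2^{\prime}(1-\omega_2)=\phi^{\prime}$ and $h_2^{\prime}=\phi^{\prime}/(1-\omega_2)$. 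Note also that $\phi^{\prime}(z)=1/[(1+ze^{i\theta_1})(1+ze^{i\theta_2})]$, so the kernel in \eqref{24} is exactly $\kappa(z)=z\phi^{\prime}(z)$.

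For local univalence I would expand $\abs{h_3^{\prime}}^2-\abs{g_3^{\prime}}^2$, whose diagonal terms $t^2(1-\abs{\omega_1}^2)\abs{h_1^{\prime}}^2+(1-t)^2(1-\abs{\omega_2}^2)\abs{h_2^{\prime}}^2$ are strictly positive and whose cross term is $2t(1-t)\Re\!\left((1-\omega_1\overline{\omega}_2)h_1^{\prime}\overline{h_2^{\prime}}\right)$. Substituting the formulas above gives
\[
(1-\omega_1\overline{\omega}_2)h_1^{\prime}\overline{h_2^{\prime}}=\frac{(1-\omega_1\overline{\omega}_2)\abs{\phi^{\prime}}^2}{(1+\omega_1)(1-\overline{\omega}_2)}=\frac{\abs{\phi^{\prime}}^2}{W},\qquad W:=\frac{(1+\omega_1)(1-\overline{\omega}_2)}{1-\omega_1\overline{\omega}_2}=1+\frac{\omega_1-\overline{\omega}_2}{1-\omega_1\overline{\omega}_2}.
\]
Since $\Re(1/W)=\Re(W)/\abs{W}^2$, the hypothesis $\Re(W)\geqq0$ is identical to $\Re\!\left((1-\omega_1\overline{\omega}_2)h_1^{\prime}\overline{h_2^{\prime}}\right)\geqq0$; hence the cross term is nonnegative, $\abs{h_3^{\prime}}^2-\abs{g_3^{\prime}}^2>0$, and $f_3$ is locally univalent and sense-preserving. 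This is precisely the local-univalence criterion of \cite[Theorem 2]{WA}, which one may cite directly.

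For the convexity I would apply Lemma \ref{Lem5} to $f=h_3-g_3$, which satisfies $f(0)=0$ and $f^{\prime}(0)=\phi^{\prime}(0)=1\neq0$ (using $\omega_1(0)=0$), taking $\kappa=z\phi^{\prime}$. Using $h_2^{\prime}-g_2^{\prime}=\phi^{\prime}$ and $h_1^{\prime}-g_1^{\prime}=h_1^{\prime}(1-\omega_1)=(1-\omega_1)\phi^{\prime}/(1+\omega_1)$, I compute
\[
\frac{z(h_3-g_3)^{\prime}}{\kappa(z)}=\frac{h_3^{\prime}-g_3^{\prime}}{\phi^{\prime}}=t\,\frac{1-\omega_1}{1+\omega_1}+(1-t).
\]
Because $\abs{\omega_1}<1$ forces $\Re\!\left((1-\omega_1)/(1+\omega_1)\right)>0$ throughout $\D$, the right-hand side has positive real part for every $t\in[0,1]$ (the nonnegative summands $t\,\Re((1-\omega_1)/(1+\omega_1))$ and $1-t$ cannot vanish simultaneously). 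Lemma \ref{Lem5} then shows that $h_3-g_3$ is univalent and convex in the direction of the real axis, and Lemma \ref{Lem1}, together with the local univalence already established, upgrades this to the univalence and convexity of $f_3$ in the direction of the real axis.

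The main obstacle is the local-univalence step: the hypothesis is phrased purely in terms of the dilatations, and the real content is to recognize, through the algebraic identities $(1-\omega_1\overline{\omega}_2)h_1^{\prime}\overline{h_2^{\prime}}=\abs{\phi^{\prime}}^2/W$ and $\Re(1/W)=\Re(W)/\abs{W}^2$, that the stated condition coincides exactly with the cross-term positivity $\Re\!\left((1-\omega_1\overline{\omega}_2)h_1^{\prime}\overline{h_2^{\prime}}\right)\geqq0$ that governs the sign of $\abs{h_3^{\prime}}^2-\abs{g_3^{\prime}}^2$. Once this identification is in hand, the convexity step is a routine application of Lemma \ref{Lem5}, the crucial observation being simply that the special form of $\phi$ makes $\kappa=z\phi^{\prime}$.
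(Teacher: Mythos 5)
Your proof is correct and takes essentially the same route as the paper: the convexity step (writing $h_1^{\prime}-g_1^{\prime}=\phi^{\prime}\,\frac{1-\omega_1}{1+\omega_1}$ and $h_2^{\prime}-g_2^{\prime}=\phi^{\prime}$, noting $\kappa=z\phi^{\prime}$, applying Lemma \ref{Lem5} to $h_3-g_3$ and then Lemma \ref{Lem1}) is identical to the paper's argument. The only difference is that the paper dismisses local univalence with a citation to the method of \cite[Theorem 2]{WA}, whereas you spell that method out, including the worthwhile observation that under $h_1+g_1=h_2-g_2=\phi$ the stated dilatation hypothesis is exactly equivalent to the cross-term condition $\Re\left((1-\omega_{1}\overline{\omega}_{2})h_{1}^{\prime}\overline{h_{2}^{\prime}}\right)\geqq0$ appearing in that cited result.
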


\begin{proof}
By applying the similar method as \cite [Theorem 2]{WA}, we know that $f_{3}$ is locally univalent.
Next, we prove that $f_{3}$ is convex in the direction of real axis.
Note that
\begin{equation*}
h_{1}^{\prime}-g_{1}^{\prime}=(h_{1}^{\prime}+g_{1}^{\prime})\left(\frac{h_{1}^{\prime}-g_{1}^{\prime}}{h_{1}^{\prime}+g_{1}^{\prime}}\right)
=\phi^{\prime}(z)\left(\frac{1-\omega_{1}}{1+\omega_{1}}\right)=\phi'(z)p_{1}(z),
\end{equation*}
where \begin{equation}\label{316}p_{1}(z)=\frac{1-\omega_{1}}{1+\omega_{1}}\end{equation} satisfies the condition $\Re\left(p_{1}(z)\right)>0\ (z\in\D)$. Thus, from
 \eqref{24}, we find that
\begin{equation*}\label{034}\begin{split}
\Re\left(\frac{z(h_{3}^{\prime}-g_{3}^{\prime})}{\kappa(z)}\right)&=\Re\left(\frac{z}{\kappa(z)}[t(h_{1}^{\prime}-g_{1}^{\prime})+(1-t)(h_{2}^{\prime}-g_{2}^{\prime})]\right)\\
&=t\Re\left(\frac{z\phi^{\prime}(z)}{\kappa(z)}p_{1}(z)\right)+(1-t)\Re\left(\frac{z\phi^{\prime}(z)}{\kappa(z)}\right)\\
&=t\Re\left(p_{1}(z)\right)+(1-t)>0.\end{split}
\end{equation*}
Therefore, by Lemma \ref{Lem5}, we know that $h_{3}-g_{3}$ is convex in the direction of real axis. Moreover, by Lemma \ref{Lem1}, we deduce that $f_{3}$ is univalent and convex in the direction of real axis.
\end{proof}

In what follows, we provide some results involving the convolution $f_{1}*f_{2}$ to be univalent and convex in one direction.

\begin{theorem}\label{t6}
Let $f_{j}=h_{j}+\overline{g_{j}}\in\mathcal{S}_{\mathcal{H}}^{0}\ (j=1,2)$ with
 $f_{1}\in \mathcal{W}_{\mathcal{H}}^{1}\left(\frac{z}{1-z}\right)$ and $f_{2}\in \mathcal{W}_{\mathcal{H}}^{1}(\phi)$. If there exists two constants $\theta_{1},\theta_{2}\in\R$ such that
\begin{equation*}
\phi(z)=\int_{0}^{z}\frac{1}{(1+\zeta e^{i\theta_{1}})(1+\zeta e^{i\theta_{2}})}d\zeta\quad(z\in\D),
\end{equation*}
then $f_{1}*f_{2}$ is convex in the direction of real axis provided it is locally univalent.
\end{theorem}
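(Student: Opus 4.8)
The plan is to run the argument through the Clunie--Sheil-Small criterion. Since $f_1*f_2=(h_1*h_2)+\overline{g_1*g_2}$ is assumed locally univalent (hence sense-preserving), Lemma \ref{Lem1} tells me it is univalent and convex in the direction of the real axis as soon as the analytic function $H-G:=h_1*h_2-g_1*g_2$ maps $\D$ univalently onto a domain convex in the direction of the real axis. To get the latter I would apply Lemma \ref{Lem5} with the kernel $\kappa(z)=z\phi'(z)=\frac{z}{(1+ze^{i\theta_1})(1+ze^{i\theta_2})}$, which is exactly the form required there. Thus everything reduces to proving
\[
\Re\left(\frac{z(H-G)'}{z\phi'(z)}\right)>0\qquad(z\in\D).
\]

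The key device is that the hypothesis $h_1+g_1=z/(1-z)$ says precisely that $h_1+g_1$ is the identity for the Hadamard product, so $(h_1+g_1)*F=F$ for every analytic $F$. Writing out $(h_1+g_1)*(h_2-g_2)=h_2-g_2$ and $(h_1-g_1)*(h_2+g_2)=(h_1-g_1)*\phi$ and adding, the mixed terms $h_1*g_2$ and $g_1*h_2$ cancel, yielding the clean identity
\[
H-G=h_1*h_2-g_1*g_2=\tfrac12\bigl[(h_2-g_2)+(h_1-g_1)*\phi\bigr].
\]
Differentiating and using $z(a*b)'=(za')*b$ together with the standard kernels $\frac{z}{1-z}*F=F$ and $\frac{z}{(1-z)^2}*F=zF'$, the first summand contributes $z(h_2-g_2)'=z\phi'(z)\,p_2(z)$ with $p_2=(1-\omega_2)/(1+\omega_2)$, while the second gives $z\bigl((h_1-g_1)*\phi\bigr)'=\bigl(\frac{z\,p_1}{(1-z)^2}\bigr)*\phi$ with $p_1=(1-\omega_1)/(1+\omega_1)$; both $p_1,p_2$ have positive real part because $\abs{\omega_j}<1$.

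It then remains to show that the two quotients $\frac{z(h_2-g_2)'}{z\phi'}=p_2$ and $\frac{(\frac{z p_1}{(1-z)^2})*\phi}{z\phi'}$ have positive real part. The first is immediate, and for the second I would invoke the Ruscheweyh--Sheil-Small result, Lemma \ref{Lem6}, with the starlike Koebe function $\psi(z)=z/(1-z)^2$, the analytic function $F=p_1$ (so that $\psi F=\frac{z p_1}{(1-z)^2}$), and the convex function $\xi=\phi$; since $\psi*\xi=\frac{z}{(1-z)^2}*\phi=z\phi'$, Lemma \ref{Lem6} delivers exactly $\Re\left(\frac{(\frac{z p_1}{(1-z)^2})*\phi}{z\phi'}\right)>0$. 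Averaging the two positive quantities gives $\Re\bigl(z(H-G)'/(z\phi')\bigr)>0$, so Lemma \ref{Lem5} shows $H-G$ is convex in the direction of the real axis, and Lemma \ref{Lem1} then completes the proof.

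The step I expect to be the main obstacle is the hypothesis check required to invoke Lemma \ref{Lem6}, namely that $\xi=\phi$ is genuinely a convex mapping of $\D$. This must be verified directly by computing $1+z\phi''(z)/\phi'(z)=1-\frac{ze^{i\theta_1}}{1+ze^{i\theta_1}}-\frac{ze^{i\theta_2}}{1+ze^{i\theta_2}}$ and observing that $\Re\bigl(\frac{w}{1+w}\bigr)<\tfrac12$ for $\abs{w}<1$, whence $\Re\bigl(1+z\phi''/\phi'\bigr)>0$. One should also record the bookkeeping that $f_1*f_2$ is sense-preserving (supplied by the local-univalence hypothesis, so that Lemma \ref{Lem1} applies) and that $H-G$ carries the normalization $(H-G)(0)=0$, $(H-G)'(0)=1\neq0$ demanded by Lemma \ref{Lem5}.
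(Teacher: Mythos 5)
Your proposal is correct and follows essentially the same route as the paper: the identity $H-G=\tfrac12\left[(h_1+g_1)*(h_2-g_2)+(h_1-g_1)*(h_2+g_2)\right]$, the reduction to $\Re\left(z(H-G)'/z\phi'\right)>0$ via Lemma \ref{Lem5} with $\kappa=z\phi'$, the positive-real-part factors $p_1,p_2$, and the appeal to Lemma \ref{Lem6} with $\psi(z)=z/(1-z)^2$ and $\xi=\phi$ are exactly the paper's argument. Your explicit check that $\phi$ is convex (via $\Re\left(1+z\phi''/\phi'\right)>0$) is a hypothesis of Lemma \ref{Lem6} that the paper uses without verification, so that addition is a genuine improvement in completeness rather than a different method.
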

\begin{proof}Let
\begin{equation*}
F_{1}=(h_{1}+g_{1})*(h_{2}-g_{2})=h_{1}*h_{2}-h_{1}*g_{2}+g_{1}*h_{2}-g_{1}*g_{2},
\end{equation*}
and
\begin{equation*}
F_{2}=(h_{1}-g_{1})*(h_{2}+g_{2})=h_{1}*h_{2}+h_{1}*g_{2}-g_{1}*h_{2}-g_{1}*g_{2}.
\end{equation*}
Then
\begin{equation*}
\frac{1}{2}\left(F_{1}+F_{2}\right)=h_{1}*h_{2}-g_{1}*g_{2}=H-G,
\end{equation*}
where $H=h_{1}*h_{2}$ and $G=g_{1}*g_{2}$. Clearly, $f_{1}*f_{2}=H+\overline{G}$.

Note that
\begin{equation*}\label{F1}\begin{split}
zF_{1}^{\prime}(z)=&(h_{1}+g_{1})*z(h_{2}-g_{2})^{\prime}
=\frac{z}{1-z}*z(h_{2}-g_{2})^{\prime}\\
=&z(h_{2}^{\prime}-g_{2}^{\prime})
=z\left(\frac{h_{2}^{\prime}-g_{2}^{\prime}}{h_{2}^{\prime}+g_{2}^{\prime}}\right)(h_{2}^{\prime}+g_{2}^{\prime})\\
=&z\left(\frac{1-\omega_{2}}{1+\omega_{2}}\right)(h_{2}^{\prime}+g_{2}^{\prime})
=z\phi^{\prime}(z) p_{2}(z),
\end{split}
\end{equation*}
where \begin{equation}\label{323} p_{2}(z)=\frac{1-\omega_{2}}{1+\omega_{2}}\end{equation}
satisfies the condition $\Re\left(p_{2}(z)\right)>0\ (z\in\D)$.
Similarly, we have
\begin{equation*}\label{F2}\begin{split}
zF_{2}^{\prime}(z)&=z(h_{1}-g_{1})^{\prime}*(h_{2}+g_{2})
=\frac{z}{(1-z)^{2}}\left(\frac{h_{1}^{\prime}-g_{1}^{\prime}}{h_{1}^{\prime}+g_{1}^{\prime}}\right)*\phi(z)\\
&=\frac{z}{(1-z)^{2}}\left(\frac{1-\omega_{1}}{1+\omega_{1}}\right)*\phi(z)
=\frac{z}{(1-z)^{2}}p_{1}(z)*\phi(z),
\end{split}
\end{equation*}
where $p_{1}(z)$ is given by \eqref{316}.
Thus,
\begin{equation*}
z\left(F_{1}^{\prime}(z)+F_{2}^{\prime}(z)\right)=z\phi^{\prime}(z)p_{2}(z)+\frac{z}{(1-z)^{2}}p_{1}(z)*\phi(z).
\end{equation*}
In view of $z\phi^{\prime}(z)=\kappa(z)$, \eqref{24} and Lemma \ref{Lem6}, we deduce that
\begin{equation*}\begin{split}\label{37}
\Re\left(\frac{zF_{1}^{\prime}(z)+zF_{2}^{\prime}(z)}{2\kappa(z)}\right)&=\frac{1}{2}\Re\left(p_{2}(z)\right)+\frac{1}{2}
\Re\left(\frac{\frac{z}{(1-z)^{2}}p_{1}(z)*\phi(z)}{z\phi^{\prime}(z)}\right)\\
&=\frac{1}{2}\Re\left(p_{2}(z)\right)+\frac{1}{2}\Re\left(\frac{\frac{z}{(1-z)^{2}}p_{1}(z)*\phi(z)}{\frac{z}{(1-z)^{2}}*\phi(z)}\right)>0.
\end{split}
\end{equation*}
By Lemma \ref{Lem5}, we know that $H-G$ is convex in the direction of real axis. Furthermore, by Lemma \ref{Lem1}, we conclude that $f_{1}*f_{2}$  is univalent and convex in the direction of real axis provided it is locally univalent.
\end{proof}

\begin{rem}
{\rm Theorem \ref{t6} provides an extension of Theorem 5 in \cite{Dorff}.}
\end{rem}

From Theorem \ref{t6},  we easily get the following result.

\begin{cor}\label{t7}
Let $f_{j}=h_{j}+\overline{g_{j}}\in\mathcal{S}_{\mathcal{H}}^{0}\ (j=1,2)$ with  $f_{1}\in \mathcal{W}_{\mathcal{H}}^{-1}\left(\frac{z}{1-z}\right)$ and $f_{2}\in \mathcal{W}_{\mathcal{H}}^{-1}(\phi)$. If there exists two constants $\theta_{1},\theta_{2}\in\R$ such that
\begin{equation*}\label{38}
\phi(z)=\int_{0}^{z}\frac{1}{(1+\zeta e^{i\theta_{1}})(1+\zeta e^{i\theta_{2}})}d\zeta\quad(z\in\D),
\end{equation*}
then $f_{1}*f_{2}$ is convex in the direction of real axis provided it is locally univalent.
\end{cor}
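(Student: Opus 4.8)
The plan is to run the argument of Theorem~\ref{t6} verbatim, with the sum and difference of the two dilatation-carrying parts interchanged so as to exploit the new hypotheses $h_1-g_1=z/(1-z)$ and $h_2-g_2=\phi$ (the defining relations of $\mathcal{W}_{\mathcal{H}}^{-1}$). Heuristically, replacing each $g_j$ by $-g_j$ turns the relations $h_j-g_j=\phi_j$ into $h_j+g_j=\phi_j$ while leaving $f_1*f_2=h_1*h_2+\overline{g_1*g_2}$ unchanged, which is why the statement ``follows from'' Theorem~\ref{t6}; since this substitution need not preserve univalence, I would make the deduction rigorous by carrying out the parallel computation directly. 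Writing $H=h_1*h_2$ and $G=g_1*g_2$, so that $f_1*f_2=H+\overline{G}$, the goal is to show that the analytic function $H-G$ maps $\D$ onto a domain convex in the direction of the real axis; then Lemma~\ref{Lem1}, together with the assumed local univalence of $f_1*f_2$, gives the conclusion.

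To produce $H-G$ I would set $F_1=(h_1-g_1)*(h_2+g_2)$ and $F_2=(h_1+g_1)*(h_2-g_2)$, so that expanding the convolutions yields $\tfrac12(F_1+F_2)=h_1*h_2-g_1*g_2=H-G$. The point of this particular pairing is that every difference factor is now one of the prescribed data: $h_1-g_1=z/(1-z)$ acts as the convolution identity, while $h_2-g_2=\phi$ supplies the kernel. Using $z(A*B)'=A*(zB')=(zA')*B$ I would compute
\begin{equation*}
zF_1'=(h_1-g_1)*z(h_2+g_2)'=\frac{z}{1-z}*z(h_2+g_2)'=z\big(h_2'+g_2'\big)=z\phi'(z)\,\widetilde{p}_2(z),
\end{equation*}
where $\widetilde{p}_2=\dfrac{h_2'+g_2'}{h_2'-g_2'}=\dfrac{1+\omega_2}{1-\omega_2}$ satisfies $\Re(\widetilde{p}_2)>0$ because $|\omega_2|<1$, and likewise
\begin{equation*}
zF_2'=z(h_1-g_1)'*(h_2+g_2)=\frac{z}{(1-z)^2}\,\widetilde{p}_1(z)*\phi(z),\qquad \widetilde{p}_1=\frac{1+\omega_1}{1-\omega_1},\quad \Re(\widetilde{p}_1)>0.
\end{equation*}

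With $\kappa(z)=z\phi'(z)=\dfrac{z}{(1-z)^2}*\phi(z)$ as in \eqref{24}, combining these two identities gives
\begin{equation*}
\Re\left(\frac{z(H-G)'(z)}{\kappa(z)}\right)=\frac12\,\Re\big(\widetilde{p}_2(z)\big)+\frac12\,\Re\left(\frac{\frac{z}{(1-z)^2}\widetilde{p}_1(z)*\phi(z)}{\frac{z}{(1-z)^2}*\phi(z)}\right),
\end{equation*}
whereupon Lemma~\ref{Lem6}, applied with the starlike function $z/(1-z)^2$, the convex function $\phi$, and $F=\widetilde{p}_1$, makes the second term positive, the first being positive by the bound on $\widetilde{p}_2$. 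Lemma~\ref{Lem5} then yields that $H-G$ is convex in the direction of the real axis, and Lemma~\ref{Lem1} finishes the proof. The only point needing genuine care is confirming that the sign switch from $\mathcal{W}_{\mathcal{H}}^{1}$ to $\mathcal{W}_{\mathcal{H}}^{-1}$ is harmless: one must check that the factors $p_j=(1-\omega_j)/(1+\omega_j)$ of Theorem~\ref{t6} are merely replaced by their reciprocals $\widetilde{p}_j=(1+\omega_j)/(1-\omega_j)$, which still have positive real part, and that $\phi$ is the same convex integral kernel as before so that Lemma~\ref{Lem6} applies unchanged; both verifications are immediate, so the corollary follows from the method of Theorem~\ref{t6} with only this bookkeeping.
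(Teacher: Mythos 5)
Your proof is correct and is essentially the paper's own route: the paper disposes of this corollary with the single remark that it follows ``easily'' from Theorem~\ref{t6}, and what you have written out is exactly that argument with the roles of $h_j+g_j$ and $h_j-g_j$ interchanged, using the same decomposition $\tfrac12(F_1+F_2)=H-G$, the same kernel $\kappa(z)=z\phi'(z)=\frac{z}{(1-z)^2}*\phi(z)$, and the same Lemmas~\ref{Lem5}, \ref{Lem6} and \ref{Lem1}. Your added observation---that the formal substitution $g_j\mapsto -g_j$ preserves the convolution $f_1*f_2$ and local univalence but not obviously the univalence of the $f_j$, so one should either rerun the computation directly or note that the proof of Theorem~\ref{t6} only uses sense-preserving local univalence---is a genuine subtlety the paper glosses over, and you handle it appropriately.
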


Finally, we consider the convolution $f_{1}*f_{2}$ with $f_{1}\in \mathcal{W}_{\mathcal{H}}^{-1}\left(\frac{z}{1-z}\right)$ and $f_{2}\in \mathcal{W}_{\mathcal{H}}^{1}(\phi)$.
\begin{theorem}\label{t8}
Let $f_{j}=h_{j}+\overline{g_{j}}\in\mathcal{S}_{\mathcal{H}}^{0}\ (j=1,2)$ with
 $f_{1}\in \mathcal{W}_{\mathcal{H}}^{-1}\left(\frac{z}{1-z}\right)$ and $f_{2}\in \mathcal{W}_{\mathcal{H}}^{1}\left(\frac{1}{2}\log\frac{1+z}{1-z}\right)$.
Then $f_{1}*f_{2}$ is convex in the direction of imaginary axis provided it is locally univalent.
\end{theorem}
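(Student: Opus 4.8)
The plan is to use Lemma~\ref{Lem1} to transfer the problem from the harmonic map $f_{1}*f_{2}=H+\overline{G}$, where $H=h_{1}*h_{2}$ and $G=g_{1}*g_{2}$, to its analytic associate $H+G=h_{1}*h_{2}+g_{1}*g_{2}$: since local univalence is assumed, $f_{1}*f_{2}$ is sense\nbd-preserving, so it will be univalent and convex in the direction of the imaginary axis as soon as $H+G$ is an analytic univalent mapping convex in that direction. To reach $H+G$ I would symmetrize the device of Theorem~\ref{t6} with plus signs: put $F_{1}=(h_{1}+g_{1})*(h_{2}+g_{2})$ and $F_{2}=(h_{1}-g_{1})*(h_{2}-g_{2})$, so that $\frac{1}{2}(F_{1}+F_{2})=h_{1}*h_{2}+g_{1}*g_{2}=H+G$. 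The merit of this choice is that $F_{1}$ carries only the prescribed datum $h_{2}+g_{2}=\frac{1}{2}\log\frac{1+z}{1-z}$, while $F_{2}$ carries only $h_{1}-g_{1}=\frac{z}{1-z}$.

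Next I would evaluate the two pieces by placing the derivative on the appropriate factor. For $F_{2}$, exploiting that $\frac{z}{1-z}$ is the Hadamard identity gives $zF_{2}'(z)=(h_{1}-g_{1})*z(h_{2}-g_{2})'=z(h_{2}-g_{2})'$; writing $(h_{2}-g_{2})'=(h_{2}+g_{2})'\frac{1-\omega_{2}}{1+\omega_{2}}$ with $(h_{2}+g_{2})'=\frac{1}{1-z^{2}}$ then yields $zF_{2}'(z)=\frac{z}{1-z^{2}}\,p_{2}(z)$, with $p_{2}$ as in \eqref{323} and $\Re(p_{2})>0$. For $F_{1}$, I would differentiate the unknown factor: $zF_{1}'(z)=[z(h_{1}+g_{1})']*(h_{2}+g_{2})$, and since $(h_{1}-g_{1})'=\frac{1}{(1-z)^{2}}$ one has $z(h_{1}+g_{1})'=\frac{z}{(1-z)^{2}}\cdot\frac{1+\omega_{1}}{1-\omega_{1}}$. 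Here I would invoke Lemma~\ref{Lem6} with the starlike Koebe function $\frac{z}{(1-z)^{2}}$, the convex strip map $\frac{1}{2}\log\frac{1+z}{1-z}$, and the multiplier $\frac{1+\omega_{1}}{1-\omega_{1}}$, which has positive real part because $\abs{\omega_{1}}<1$. Using the identity $\frac{z}{(1-z)^{2}}*\Xi=z\Xi'$ to compute the denominator $\frac{z}{(1-z)^{2}}*\frac{1}{2}\log\frac{1+z}{1-z}=\frac{z}{1-z^{2}}$, Lemma~\ref{Lem6} delivers $\Re\left(\frac{zF_{1}'(z)}{z/(1-z^{2})}\right)>0$.

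Combining the two contributions, $\Re\left(\frac{z(H+G)'(z)}{z/(1-z^{2})}\right)=\frac{1}{2}\Re\left(\frac{zF_{1}'(z)}{z/(1-z^{2})}\right)+\frac{1}{2}\Re(p_{2}(z))>0$, that is $\Re\left((1-z^{2})(H+G)'(z)\right)>0$. Setting $p(z)=(1-z^{2})(H+G)'(z)$, one has $p(0)=1$ (the linear coefficient of $H+G$ is $1$, and $g_{1}*g_{2}$ contributes no linear term), so $p\in\mathcal{P}$ and $(H+G)'(z)=p(z)/(1-z^{2})$. This is precisely the representation of Lemma~\ref{Lem3} with $\mu=\nu=\frac{\pi}{2}$, whence $H+G$ is univalent and convex in the direction of the imaginary axis, and Lemma~\ref{Lem1} then finishes the proof. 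The main obstacle is conceptual rather than computational: one must avoid reflexively applying Lemma~\ref{Lem5} (which is calibrated to the real axis) to the kernel $\frac{z}{1-z^{2}}$, and instead recognize that $\Re\left((1-z^{2})(H+G)'\right)>0$ is the Royster--Ziegler criterion for the imaginary direction, here packaged as Lemma~\ref{Lem3} with $\mu=\nu=\frac{\pi}{2}$. The secondary care is the correct placement of the derivative in each $F_{j}$ so that Lemma~\ref{Lem6} is applied with genuinely starlike and convex data.
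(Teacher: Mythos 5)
Your proposal is correct and follows essentially the same route as the paper's own proof: the same decomposition $H+G=\tfrac{1}{2}\left[(h_{1}+g_{1})*(h_{2}+g_{2})+(h_{1}-g_{1})*(h_{2}-g_{2})\right]$ (with the labels $F_{1},F_{2}$ swapped), the same placement of derivatives, the same use of the identity $\frac{z}{1-z}$ and of Lemma~\ref{Lem6} with the Koebe function and the strip map, arriving at $\Re\left((1-z^{2})(H+G)^{\prime}\right)>0$. The only cosmetic difference is that you invoke Lemma~\ref{Lem3} with $\mu=\nu=\frac{\pi}{2}$ where the paper cites Lemma~\ref{Lem2} directly with the same parameters; these express the identical Royster--Ziegler condition, so the arguments coincide.
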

\begin{proof}Let
\begin{equation*}
F_{1}=(h_{1}-g_{1})*(h_{2}-g_{2})=h_{1}*h_{2}-h_{1}*g_{2}-h_{2}*g_{1}+g_{1}*g_{2},
\end{equation*}
and
\begin{equation*}
F_{2}=(h_{1}+g_{1})*(h_{2}+g_{2})=h_{1}*h_{2}+h_{1}*g_{2}+h_{2}*g_{1}+g_{1}*g_{2},
\end{equation*}
we obtain
\begin{equation*}
\frac{1}{2}\left(F_{1}+F_{2}\right)=h_{1}*h_{2}+g_{1}*g_{2}=H+G,
\end{equation*}
where $H=h_{1}*h_{2}$ and $G=g_{1}*g_{2}$.
If we set
\begin{equation*}
\Psi(z)=\frac{1}{2}\log\frac{1+z}{1-z},
\end{equation*}
then
\begin{equation*}\label{F3}\begin{split}
zF_{1}^{\prime}(z)&=(h_{1}-g_{1})*z(h_{2}-g_{2})^{\prime}
=\frac{z}{1-z}*z(h_{2}-g_{2})^{\prime}\\
&=z(h_{2}^{\prime}-g_{2}^{\prime})
=z\left(\frac{h_{2}^{\prime}-g_{2}^{\prime}}{h_{2}^{\prime}+g_{2}^{\prime}}\right)\Psi^{\prime}(z)\\
&=z\left(\frac{1-\omega_{2}}{1+\omega_{2}}\right)\Psi^{\prime}(z)
=\frac{z}{1-z^{2}}p_{2}(z),
\end{split}
\end{equation*}
where
$p_{2}(z)$ is given by \eqref{323} and
satisfied the condition  $\Re \left(p_{2}(z)\right)>0\ (z\in\D)$.
Similarly, we have
\begin{equation*}\label{F4}\begin{split}
zF_{2}^{\prime}(z)&=z(h_{1}+g_{1})^{\prime}*(h_{2}+g_{2})
=z(h_{1}^{\prime}-g_{1}^{\prime})\left(\frac{h_{1}^{\prime}+g_{1}^{\prime}}{h_{1}^{\prime}-g_{1}^{\prime}}\right)*\Psi(z)\\
&=\frac{z}{(1-z)^{2}}\left(\frac{1+\omega_{1}}{1-\omega_{1}}\right)*\Psi(z)
=\frac{z}{(1-z)^{2}}p_{3}(z)*\Psi(z),
\end{split}
\end{equation*}
where \begin{equation}p_{3}(z)=\frac{1+\omega_{1}}{1-\omega_{1}}\end{equation} satisfies the condition
$\Re \left(p_{3}(z)\right)>0\ (z\in\D)$. Thus, we obtain
\[z\left(F_{1}^{\prime}(z)+F_{2}^{\prime}(z)\right)=\frac{z}{1-z^{2}}p_{2}(z)+\frac{z}{(1-z)^{2}}p_{3}(z)*\Psi(z).\]
By virtue of $z\Psi^{\prime}(z)=\frac{z}{1-z^{2}}$ and Lemma \ref{Lem6}, we have
\begin{equation*}\label{39}\begin{split}
\Re\left((1-z^{2})\frac{F_{1}^{\prime}(z)+F_{2}^{\prime}(z)}{2}\right)&=\frac{1}{2}\Re\left(p_{2}(z)\right)
+\frac{1}{2}\Re\left(\frac{\frac{z}{(1-z)^{2}}p_{3}(z)*\Psi(z)}{\frac{z}{1-z^{2}}}\right)\\
&=\frac{1}{2}\Re\left(p_{2}(z)\right)+\frac{1}{2}\Re\left(\frac{\frac{z}{(1-z)^{2}}p_{3}(z)*\Psi(z)}{z\Psi^{\prime}(z)}\right)\\
&=\frac{1}{2}\Re\left(p_{2}(z)\right)+\frac{1}{2}\Re\left(\frac{\frac{z}{(1-z)^{2}}p_{3}(z)*\Psi(z)}{\frac{z}{(1-z)^{2}}*\Psi(z)}\right)
>0.
\end{split}
\end{equation*}
By setting $\mu=\nu=\frac{\pi}{2}$ in Lemma \ref{Lem2}, we know that $H+G$ is convex in the direction of imaginary axis. Furthermore, by Lemma \ref{Lem1}, we deduce that $f_{1}*f_{2}$  is univalent and convex in the direction of imaginary axis provided it is locally univalent.
\end{proof}

\vskip.20in
\section{Several examples}

In this section, we give several examples to illustrate our main results.
\begin{example}\label{e1}
{\rm Let $f_{1}=h_{1}+\overline{g_{1}},$ where $h_{1}+g_{1}=\frac{1}{2}\frac{z}{1-z}+\frac{1}{4i}\log\frac{1+iz}{1-iz}$ and $\omega_{1}=z$. Then
\begin{equation*}
h_{1}=\frac{1}{8i}\log\frac{1+iz}{1-iz}+\frac{3}{8}\log(1+z)-\frac{1}{8}\log(1-z)-\frac{1}{8}\log(1+z^{2})+\frac{1}{4}\frac{z}{1-z},
\end{equation*}
 and
\begin{equation*}
g_{1}=\frac{1}{8i}\log\frac{1+iz}{1-iz}-\frac{3}{8}\log(1+z)+\frac{1}{8}\log(1-z)+\frac{1}{8}\log(1+z^{2})+\frac{1}{4}\frac{z}{1-z}.
\end{equation*}
Suppose  that $f_{2}=h_{2}+\overline{g_{2}},$ where $h_{2}+g_{2}=\frac{1}{2}\frac{z}{1-z}+\frac{1}{4i}\log\frac{1+iz}{1-iz}$ and $\omega_{2}=-z^{2}$. Then we have
\begin{equation*}
h_{2}=\frac{1}{8i}\log\frac{1+iz}{1-iz}+\frac{3}{16}\log\frac{1+z}{1-z}+\frac{1}{8}\frac{z}{1-z}+\frac{1}{8}\frac{2z-z^2}{(1-z)^2},
\end{equation*}
 and
 \begin{equation*}
 g_{2}=\frac{1}{8i}\log\frac{1+iz}{1-iz}-\frac{3}{16}\log\frac{1+z}{1-z}+\frac{3}{8}\frac{z}{1-z}-\frac{1}{8}\frac{2z-z^2}{(1-z)^2}.
 \end{equation*}
By Corollary \ref{c1}, we know that $f_{3}=\frac{1}{2}f_{1}+\frac{1}{2}f_{2}$ with $t=\frac{1}{2}$ is convex in the direction of imaginary axis. The images of $\D$ under $f_{j}\ (j=1,2,3)$ are shown in Figures 1-3, respectively.}
\begin{figure}[htbp]
    \centering
\begin{minipage}{0.45\linewidth}
    \centering
    \includegraphics[width=2.0in]{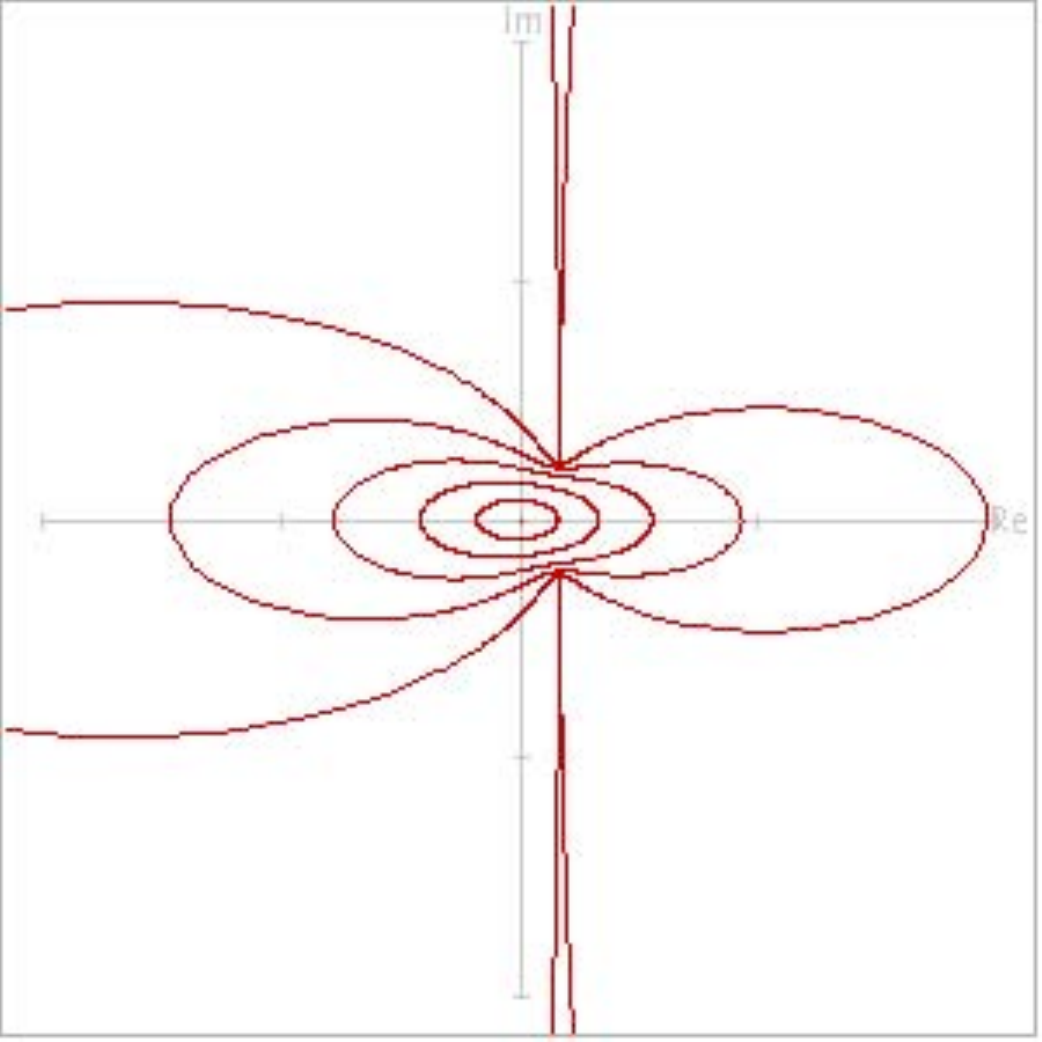}
    \caption{Image of  $f_{1}$ }
\end{minipage}
\hspace{4ex}
\begin{minipage}{0.45\linewidth}
    \centering
    \includegraphics[width=2.0in]{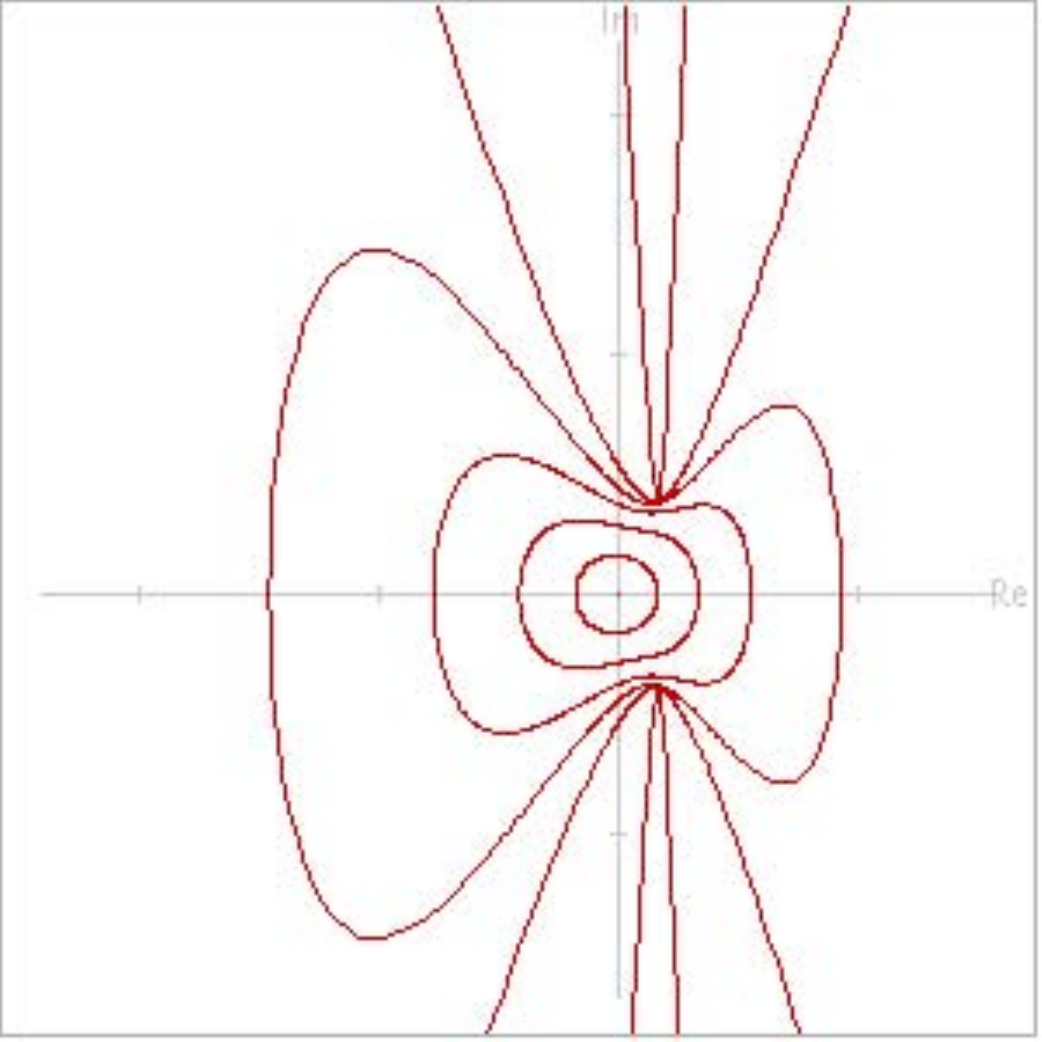}
    \caption{Image of  $f_{2}$ }
\end{minipage}
\end{figure}
\begin{figure}[htbp]
\centering
\includegraphics[width=2.0in]{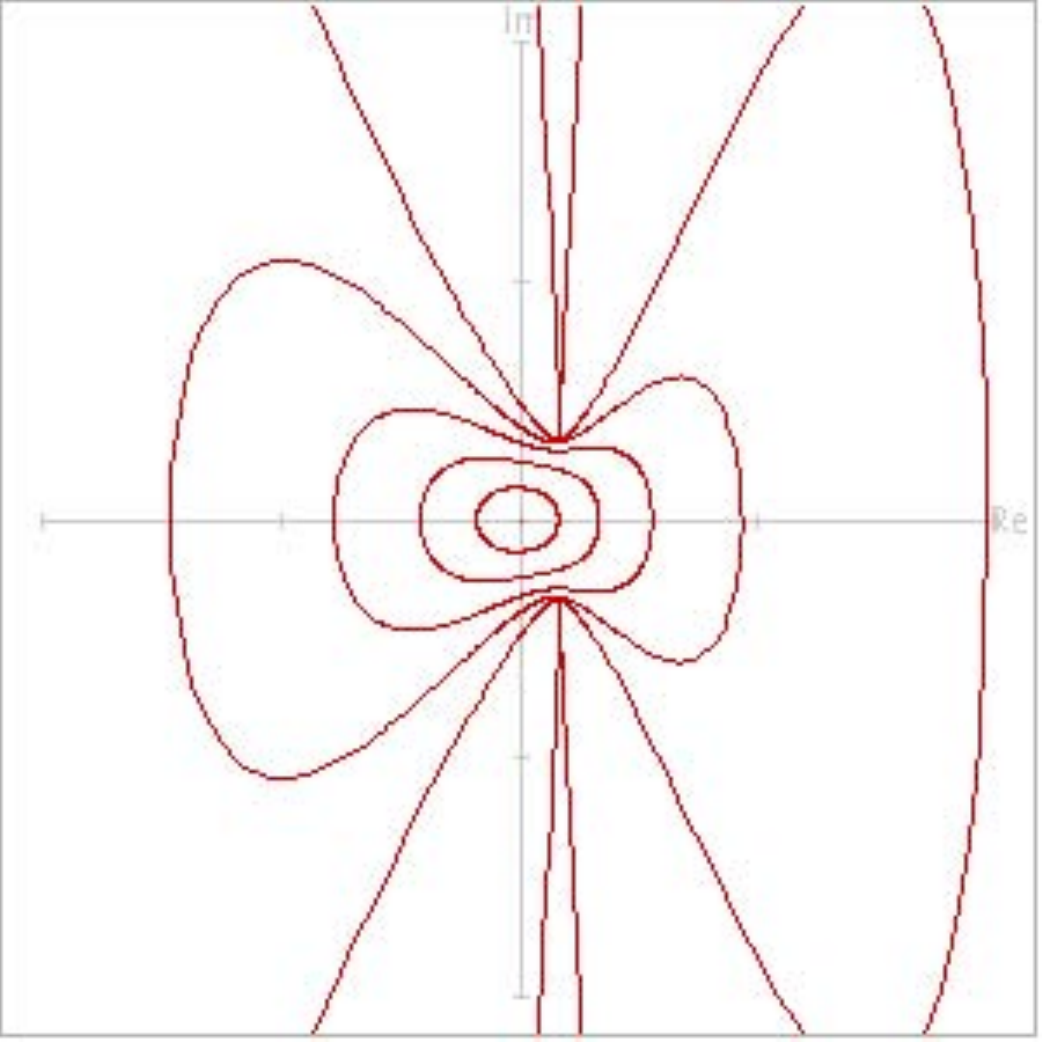}
\caption{Image of  $f_{3}=\frac{1}{2}f_{1}+\frac{1}{2}f_{2}$ }
\end{figure}
\end{example}

\begin{example}\label{e2}
{\rm Let $f_{4}=h_{4}+\overline{g_{4}},$ where $h_{4}-g_{4}=\frac{1}{4}\log\frac{1+z}{1-z}+\frac{1}{2}\frac{z}{(1-z)^2}$ and $\omega_{4}=\frac{1}{2}$. Then we obtain
 \begin{equation*}
 h_{4}=\frac{1}{2}\log\frac{1+z}{1-z}-\frac{1}{1-z}+\frac{1}{(1-z)^2},
\end{equation*}
and
 \begin{equation*}
g_{4}=\frac{1}{4}\log\frac{1+z}{1-z}-\frac{1}{1-z}+\frac{1}{2}\frac{2-z}{(1-z)^2}.
\end{equation*}
Suppose that $f_{5}=h_{5}+\overline{g_{5}},$ where $h_{5}-g_{5}=\frac{1}{4}\log\frac{1+z}{1-z}+\frac{1}{2}\frac{z}{(1-z)^2}$ and $\omega_{5}=-z$. We know that
 \begin{equation*}
 h_{5}=\frac{1}{8}\log\frac{1+z}{1-z}+\frac{1}{4}\frac{z}{1+z}+\frac{1}{4}\frac{1}{(1-z)^2}-\frac{1}{4},
\end{equation*} and
 \begin{equation*}
 g_{5}=-\frac{1}{8}\log\frac{1+z}{1-z}+\frac{1}{4}\frac{z}{1+z}+\frac{1}{4}\frac{1-2z}{(1-z)^2}-\frac{1}{4}.
\end{equation*}
By Corollary \ref{c2}, we know that $f_{6}=\frac{1}{3}f_{1}+\frac{2}{3}f_{2}$ is convex in the direction of real axis. The images of $\D$ under $f_{j}\ (j=4,5,6)$ with $t=\frac{1}{3}$ are presented in Figures 4-6, respectively.}
\begin{figure}[htbp]
    \centering
\begin{minipage}{0.45\linewidth}
    \centering
    \includegraphics[width=2.0in]{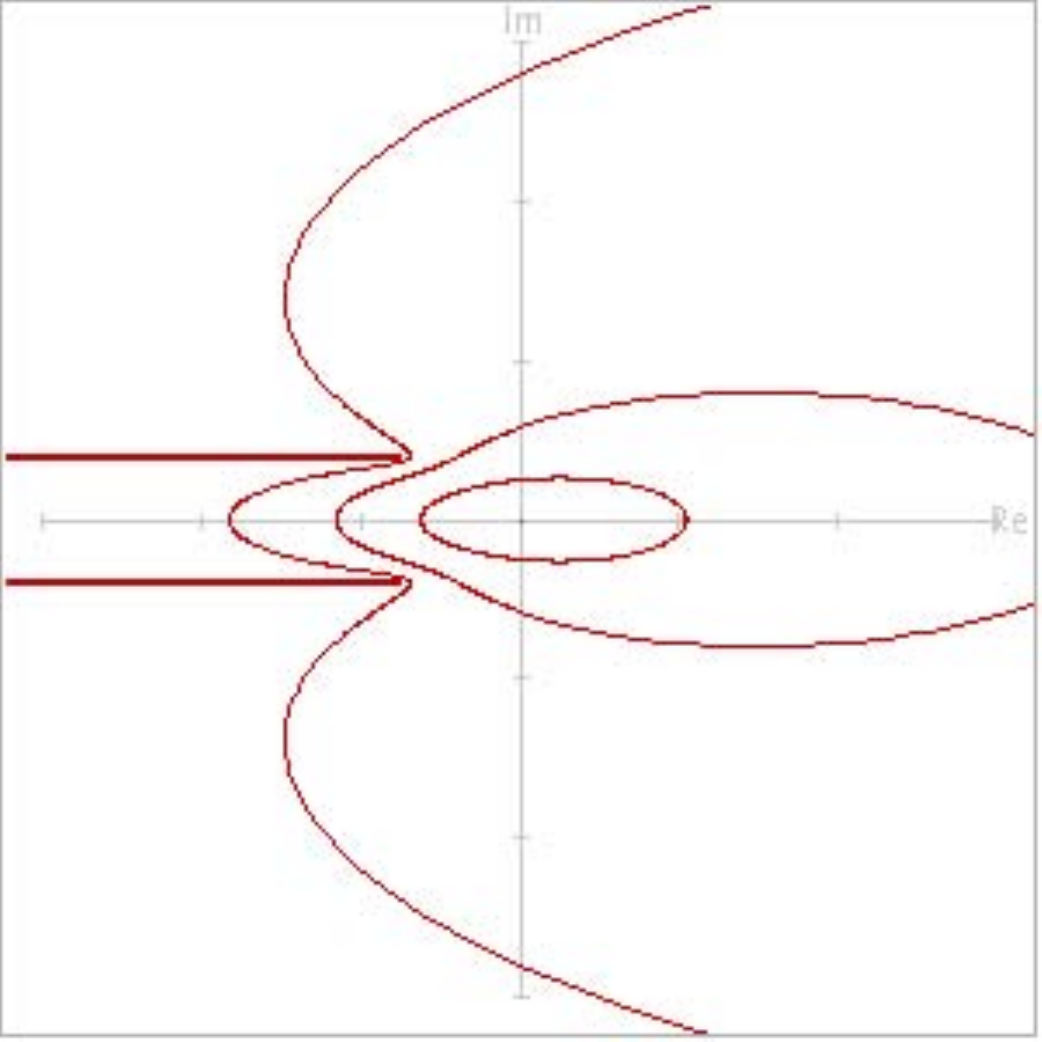}
    \caption{Image of  $f_{4}$ }
\end{minipage}
\hspace{4ex}
\begin{minipage}{0.45\linewidth}
    \centering
    \includegraphics[width=2.0in]{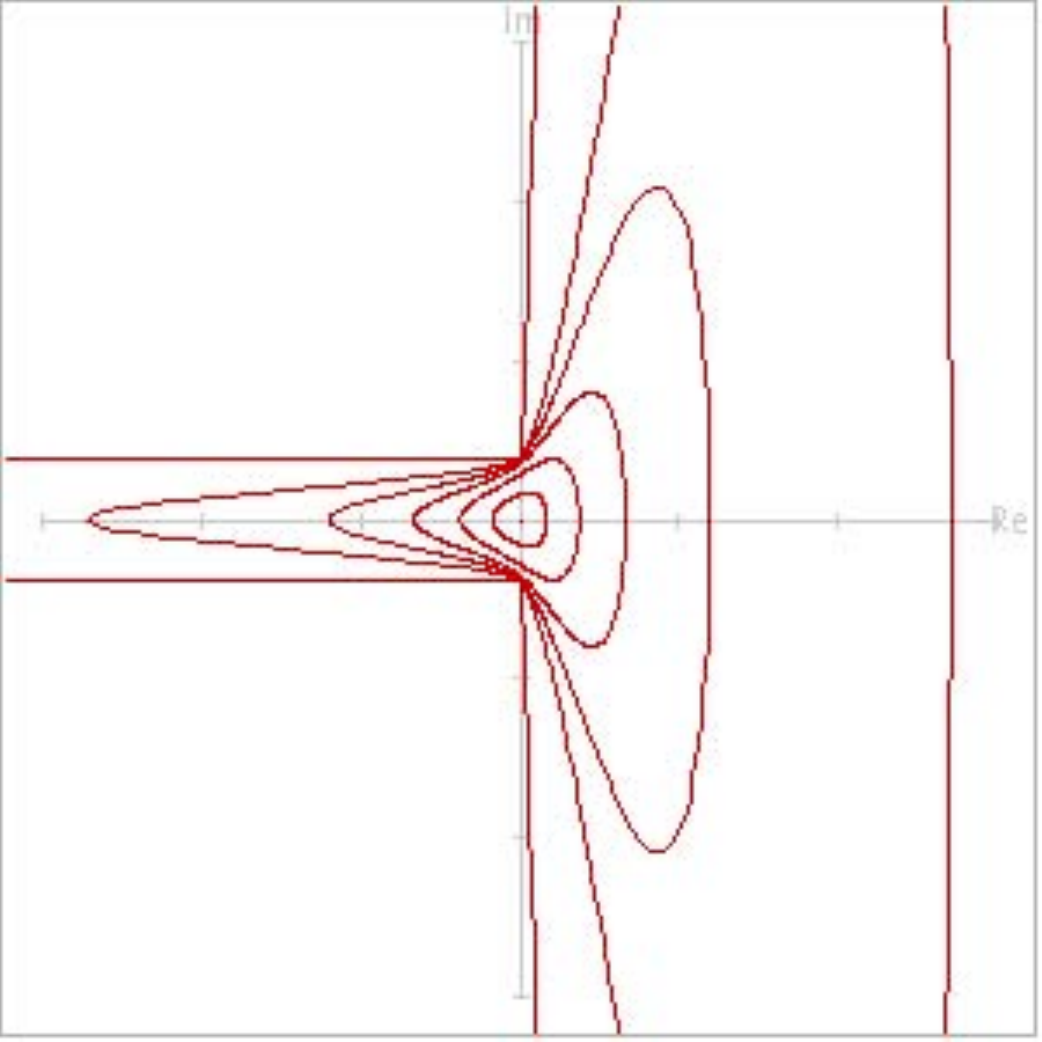}
    \caption{Image of  $f_{5}$ }
\end{minipage}
\end{figure}
\begin{figure}[htbp]
\centering
\includegraphics[width=2.0in]{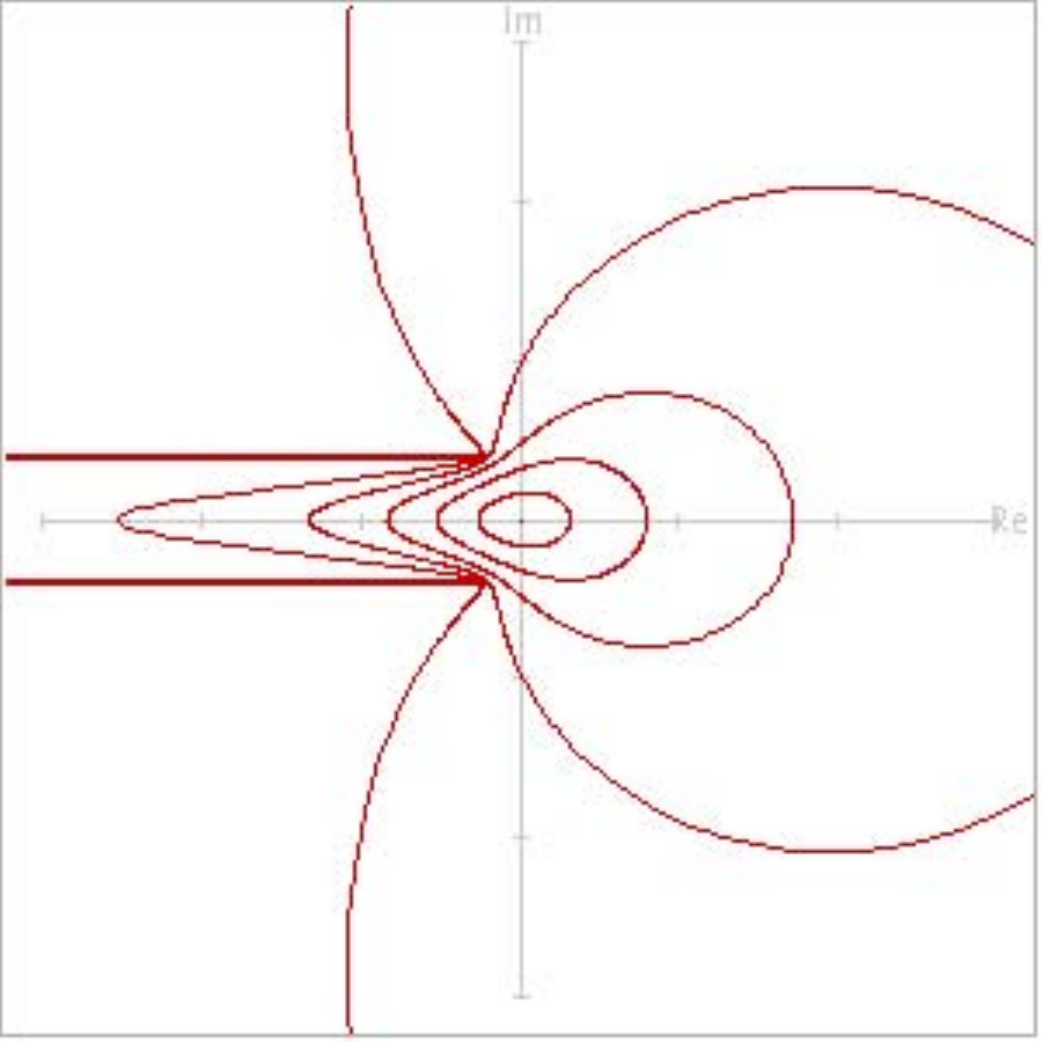}
\caption{Image of  $f_{6}=\frac{1}{3}f_{4}+\frac{2}{3}f_{5}$ }
\end{figure}
\end{example}

\begin{example}\label{e3}
{\rm Taking $\theta_{1}=\frac{\pi}{6}$ and $\theta_{2}=-\frac{\pi}{6}$ in Theorem \ref{t6}, we know that
\begin{equation*}\phi=2\arctan\left(2z+\sqrt{3}\right)-\frac{2\pi}{3}.
\end{equation*}
Let $f_{7}=h_{7}+\overline{g_{7}},$ where $h_{7}+g_{7}=\frac{z}{1-z}$ and $\omega_{7}=-z$. Then we get
 \begin{equation*}
 h_{7}=\frac{z-\frac{1}{2}z^{2}}{(1-z)^2}\quad{\rm{and}}\quad g_{7}=\frac{-\frac{1}{2}z^{2}}{(1-z)^{2}}.
\end{equation*}
Suppose  that $f_{8}=h_{8}+\overline{g_{8}},$ where $h_{8}+g_{8}=2\arctan\left(2z+\sqrt{3}\right)-\frac{2\pi}{3}$ and $\omega_{8}=z^2$. Then we find that
 \begin{equation*}
 h_{8}=\frac{\sqrt{3}}{6}\log\frac{1+\sqrt{3}z+z^2}{1+z^2}+\arctan\left(2z+\sqrt{3}\right)-\frac{\pi}{3},
\end{equation*}
 and
 \begin{equation*}
 g_{8}=-\frac{\sqrt{3}}{6}\log\frac{1+\sqrt{3}z+z^2}{1+z^2}+\arctan
 \left(2z+\sqrt{3}\right)-\frac{\pi}{3}.
\end{equation*}
Let $f_{9}=f_{7}*f_{8}=H+\overline{G}$, we have
\begin{equation*}\begin{split}
H&=h_{7}*h_{8}=\frac{1}{2}\left(h_{8}+zh_{8}^{\prime}\right)\\
&=\frac{1}{2}\left(\frac{z}{\left(1+z^{2}\right)\left(1+\sqrt{3}z+z^2\right)}+\frac{\sqrt{3}}{6}\log\frac{1+\sqrt{3}z+z^2}{1+z^2}
+\arctan\left(2z+\sqrt{3}\right)-\frac{\pi}{3}\right),
\end{split}
\end{equation*}
and
\begin{equation*}\begin{split}
G&=g_{7}*g_{8}=\frac{1}{2}\left(g_{8}-zg_{8}^{\prime}\right)\\
&=\frac{1}{2}\left(-\frac{z^3}{\left(1+z^{2}\right)\left(1+\sqrt{3}z+z^2\right)}-\frac{\sqrt{3}}{6}\log\frac{1+\sqrt{3}z+z^2}{1+z^2}+\arctan\left(2z+\sqrt{3}\right)
-\frac{\pi}{3}\right).
\end{split}
\end{equation*}
A simple calculation shows that the dilatation of $f_{9}=f_{7}*f_{8}$ is given by
$\omega_{9}=-z^2$. Thus, $f_{9}$ is locally univalent.
By Theorem \ref{t6}, we know that
\begin{equation*}\begin{split}
f_{9}&=\Re(H+G)+i\Im(H-G)\\
&=\Re\left(\frac{1}{2}\frac{z\left(1-z^2\right)}{\left(1+z^{2}\right)\left(1+\sqrt{3}z+z^2\right)}+\arctan\left(2z+\sqrt{3}\right)-\frac{\pi}{3}\right)\\
&\quad\quad+i\Im\left(\frac{1}{2}\frac{z}{1+\sqrt{3}z+z^2}+\frac{\sqrt{3}}{6}\log\frac{1+\sqrt{3}z+z^2}{1+z^2}\right)
\end{split}
\end{equation*}
is univalent and convex in the direction of real axis. The images of $\D$ under $f_{j}\ (j=7,8,9)$ are shown in Figures 7-9, respectively.}

\begin{figure}[htbp]
    \centering
\begin{minipage}{0.45\linewidth}
    \centering
    \includegraphics[width=2.0in]{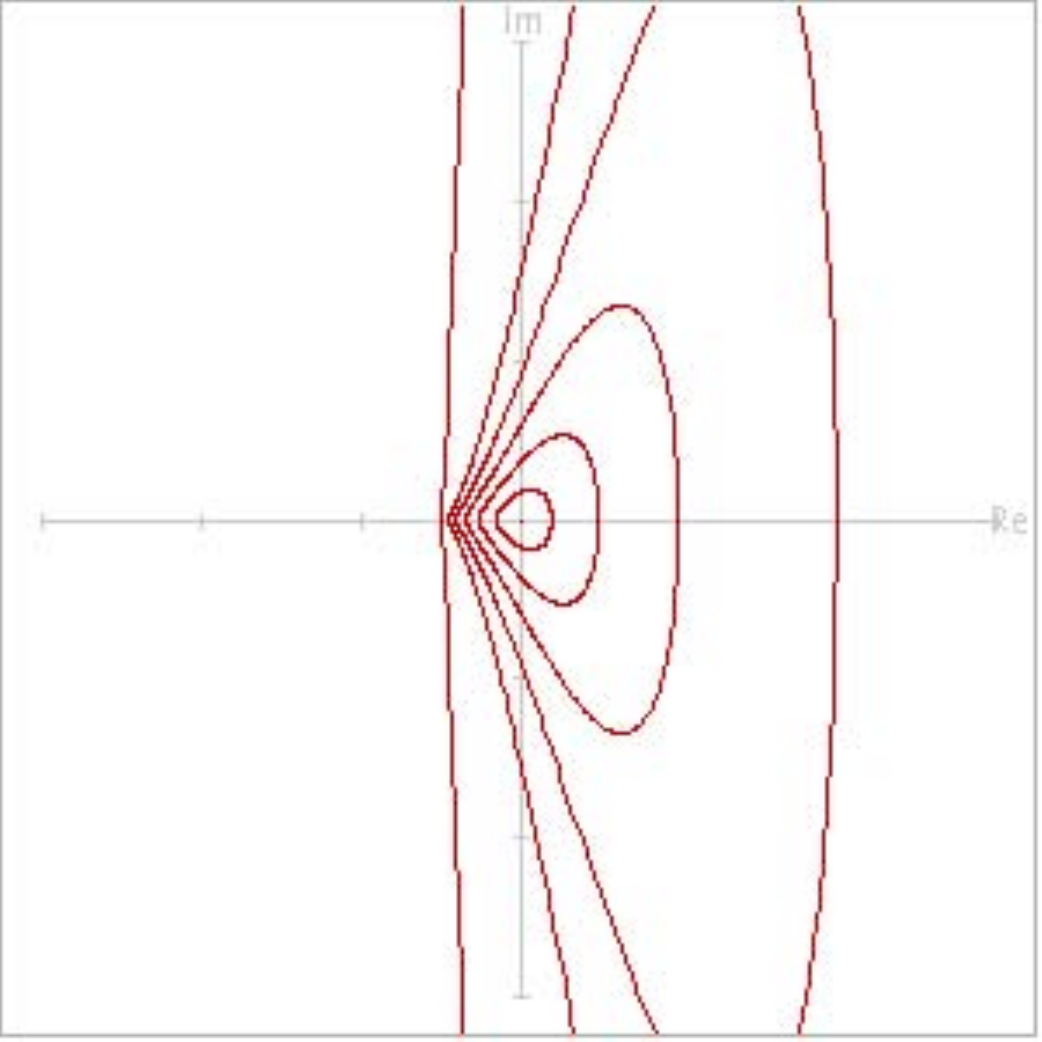}
    \caption{Image of  $f_{7}$ }
\end{minipage}
\hspace{4ex}
\begin{minipage}{0.45\linewidth}
    \centering
    \includegraphics[width=2.0in]{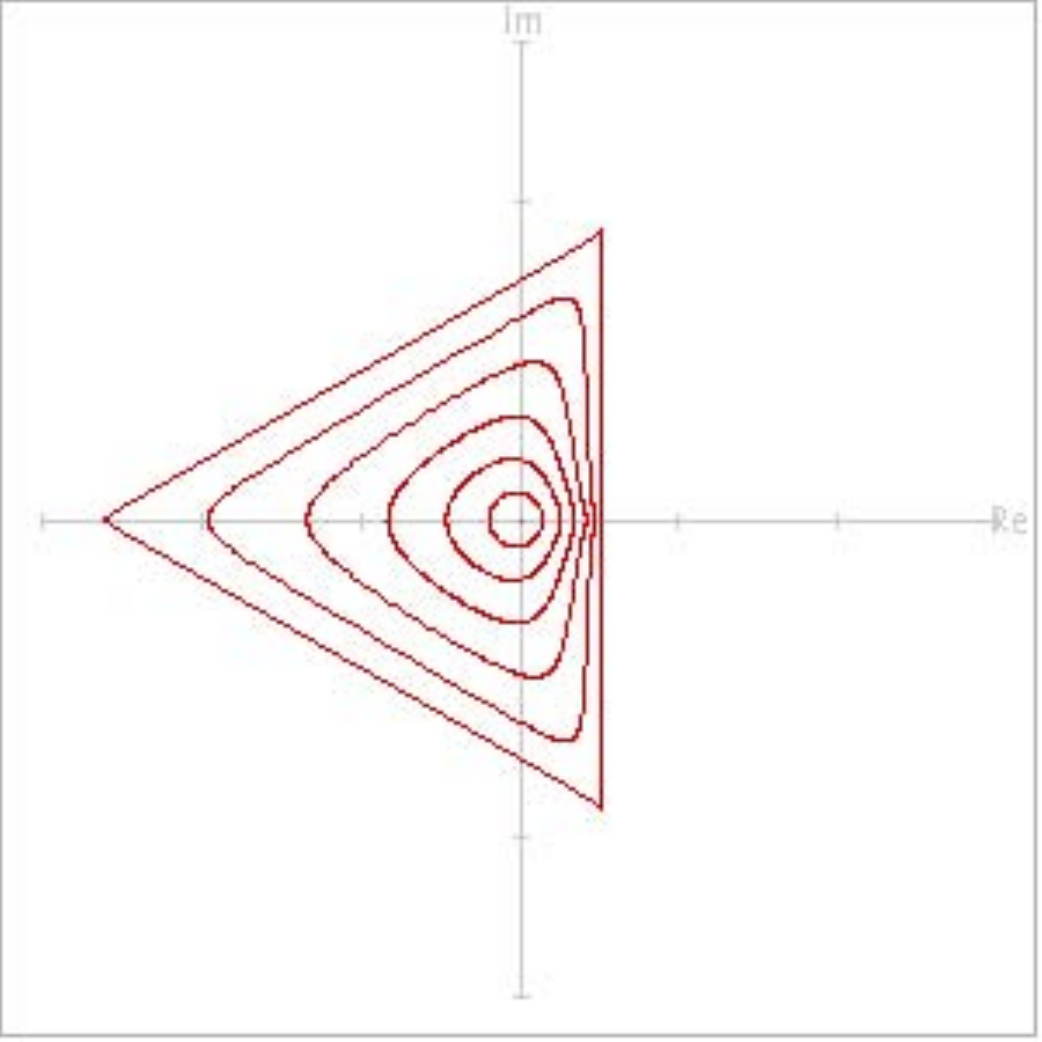}
    \caption{Image of  $f_{8}$ }
\end{minipage}
\end{figure}
\begin{figure}[htbp]
\centering
\includegraphics[width=2.0in]{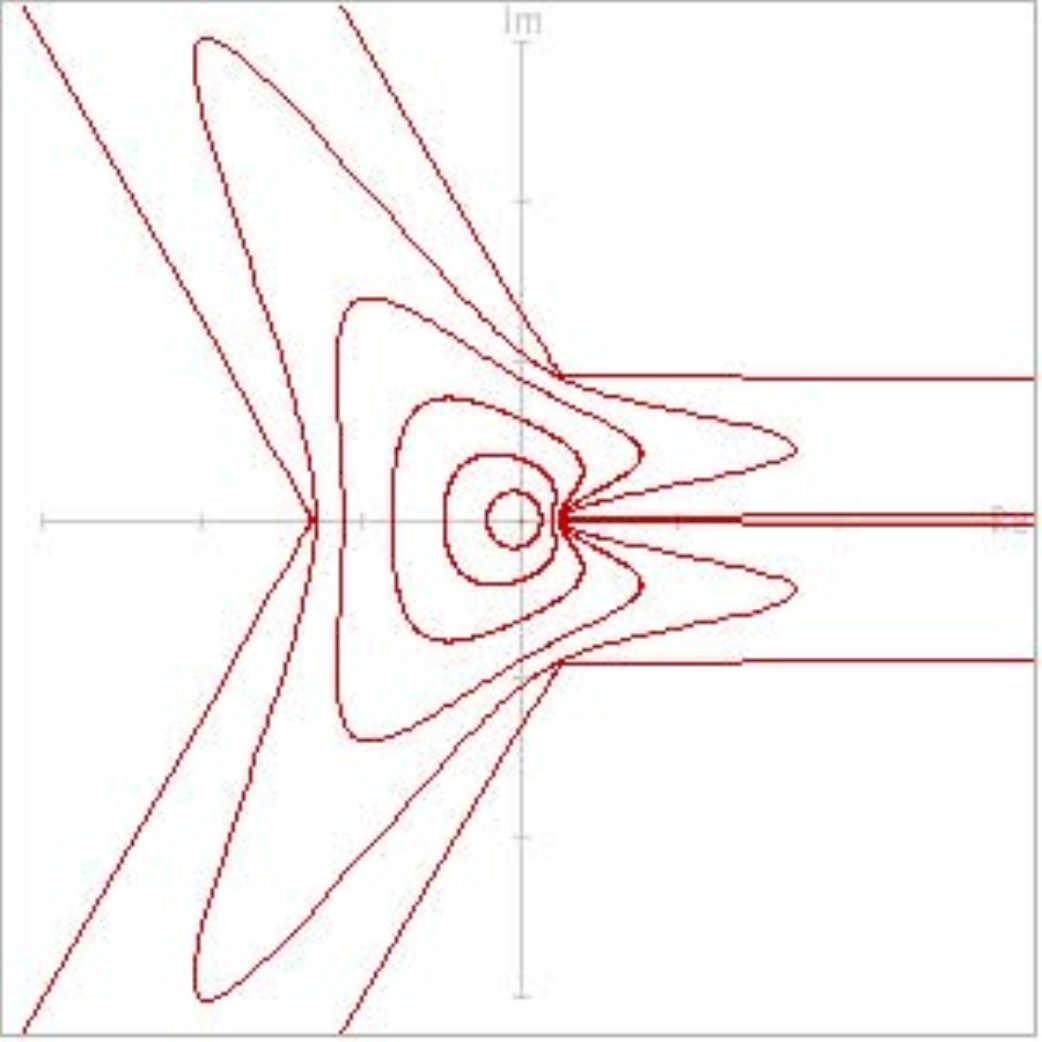}
\caption{Image of  $f_{9}=f_{7}*f_{8}$ }
\end{figure}
\end{example}

\vskip.20in

\begin{center}{\sc Acknowledgments}
\end{center}
\vskip.05in

The present investigation was supported by the \textit{National
Natural Science Foundation} under Grants Nos. 11301008, 11226088 and 11071059 of the P. R. China, and the \textit{Key Project of Natural Science Foundation of
Educational Committee of Henan Province} under Grant No. 12A110002 of
the P. R. China.

\vskip .20in

\end{document}